\documentclass[11pt]{amsart}
\usepackage{graphicx}
\usepackage[usenames]{color}
\usepackage{amssymb,amscd}

\pagestyle{plain}

\textwidth = 6 in \textheight = 9 in \oddsidemargin = 0.0 in
\evensidemargin = 0.0 in \topmargin = 0.0 in \headheight = 0.0 in
\headsep = 0.0 in
\parskip = 0.1in
\parindent = 0.0in

\usepackage{ifthen}

\newcommand{\showcomments}{yes}

\newsavebox{\commentbox}
%
{\ifthenelse{\equal{\showcomments}{yes}}%
{\footnotemark
        \begin{lrbox}{\commentbox}
        \begin{minipage}[t]{1.25in}\raggedright\sffamily\tiny
        \footnotemark[\arabic{footnote}]}
{\begin{lrbox}{\commentbox}}}%
{\ifthenelse{\equal{\showcomments}{yes}}%
{\end{minipage}\end{lrbox}\marginpar{\usebox{\commentbox}}}
{\end{lrbox}}}
\newcommand{\bea}{\begin{eqnarray*}}
\newcommand{\eea}{\end{eqnarray*}}

\newcommand{\bm}{\begin{pmatrix}}
\newcommand{\fm}{\end{pmatrix}}
\newcommand{\ra}{\rightarrow}

\newcommand{\iso}{\cong}

\DeclareMathOperator{\Aut}{Aut}
\DeclareMathOperator{\nil}{nil}

\DeclareMathOperator\tr{tr}
\DeclareMathOperator\Isom{Isom}

\newcommand\Z{\mathbb Z}

\newcommand\Q{\mathbb Q}
\newcommand\R{\mathbb R}



\newcommand{\g}{\gamma}

\newtheorem{theorem}{Theorem}[section]
\newtheorem{problem}{Problem}[section]
\newtheorem{claim}{Claim}[section]

\newtheorem{prop}[theorem]{Proposition}

\newtheorem{lemma}[theorem]{Lemma}

\title{\footnote{} Day's fixed point theorem, Group Cohomology\\ and   Quasi-isometric rigidity}
\author{Tullia Dymarz}
\address{Department of Mathematics,
University of Wisconsin-Madison, 480 Lincoln Drive,  Madison, WI 53706} \email{dymarz@math.wisc.edu}

\author{Xiangdong Xie}
\address{Department of Mathematics and Statistics,
Bowling Green State University,
Bowling Green, OH 43403} \email{xiex@bgsu.edu}
\keywords{Day's fixed point theorem,    Tukia  theorem,   quasiconformal group,        uniform quasisimilarity group. }
\begin{document}
\maketitle

\begin{abstract}

In this note we explain how Day's fixed point theorem can be used to conjugate certain groups of   biLipschitz maps  of a metric space  into 
special subgroups like similarity groups.
  In particular,  we  use Day's theorem to  establish   Tukia-type theorems  and 
 to give  new proofs of quasi-isometric  rigidity  results.

\end{abstract}

\section{Introduction}\label{intro}

In \cite{T} Tukia proves that any uniform group of quasiconformal maps  of $S^n$ for $n \geq 3$ that acts cocompactly on the space of distinct triples of $S^n$ is conjugate into the conformal group. For $n=2$ this was already proved by Sullivan in \cite{S} without the added assumption on triples.
Since the boundary of the real hyperbolic space $\mathbb{H}^n$ can be identified with $S^{n-1}$ and since quasi-isometries (resp. isometries) of $\mathbb{H}^n$ induce quasiconformal (resp. conformal)  boundary maps,  Tukia's   theorem  is used as a key ingredient in the proof of quasi-isometric rigidity of uniform lattices in the isometry groups of real hyperbolic spaces. (For more details see \cite{CC}).



The main goal of this paper is to extend Tukia's result to maps on boundaries 
of different $\delta$-hyperbolic spaces; namely to boundaries of certain 
 \emph{negatively curved homogeneous spaces} and certain \emph{millefeuille spaces}.  We leave the description of the geometry of these negatively curved homogeneous spaces (resp.  millefeuille spaces) to Section \ref{app:sec}.
For the statement of our theorems we only need to know that their visual boundaries can be identified with $N \cup \{\infty\}$  (resp. $(N \times \Q_m) \cup \{\infty\}$) where $N$ is a nilpotent Lie group (and $\Q_m$ is the $m$-adics). Additionally our maps fix the point at infinity  $\infty$ and so we consider \emph{parabolic visual metrics} on $N$ (resp. $N \times \Q_m$) instead of the standard visual metrics on the whole boundary.

The first case we treat is when $N=F^n$  (which has  dimension $n+1$)  is a model Filiform group and the parabolic visual metric is the usual  Carnot-Carath\'eodory metric. 
Recall that  a group $\Gamma$ of quasiconformal maps of a metric space $X$ is called a {\it{uniform 
quasiconformal group}} if there is some $K\ge 1$ such that every element $\gamma$ of 
$\Gamma$ is  $K$-quasiconformal.
The following theorem is proved in Section \ref{filiform}. 

\begin{theorem}\label{fili}
Let $\Gamma\subset QC(F^n)$  ($n\ge 3$)  be a locally compact uniform quasiconformal group  of the 
  $n$-th  model Filiform  group. Then there is some $f\in QC(F^n)$ such that $f\Gamma f^{-1}$ is a  conformal  group.
\end{theorem}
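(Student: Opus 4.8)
The plan is to carry out a Tukia-type argument in which Day's fixed point theorem replaces Tukia's averaging over the space of triples. The goal is to produce a single $\Gamma$-invariant measurable conformal structure on $F^n$ and then to straighten it. By Pansu's differentiability theorem, every $K$-quasiconformal map of the Carnot group $F^n$ is Pansu-differentiable almost everywhere, the Pansu derivative $D\gamma_x$ being a graded automorphism whose conformal distortion is bounded in terms of $K$ alone. I would encode a measurable conformal structure as a measurable field $x \mapsto \mu(x)$ of inner products on the horizontal layer of $F^n$ (equivalently, of homogeneous distances comparable to the Carnot-Carath\'eodory distance), and let $\Gamma$ act by pullback, $(\gamma\cdot\mu)_x$ being assembled from $\mu_{\gamma(x)}$ and $D\gamma_x$. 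A fixed point of this action is precisely a $\Gamma$-invariant conformal structure, and uniform $K$-quasiconformality confines the entire orbit to a set of structures within bounded distortion of the standard one.

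The reason Day's theorem is available is that pullback of inner-product fields is \emph{affine}: $(\gamma\cdot\mu)_x$ depends linearly on the field $\mu$. Hence the set $\mathcal C_K$ of measurable fields whose pointwise distortion from the standard structure is at most a fixed bound is a convex subset of the locally convex space of measurable symmetric-form fields; it is $\Gamma$-invariant, weak-$*$ compact (being bounded, convex and closed in an $L^\infty$-type ball), and the action on it is separately continuous and affine. To apply Day's fixed point theorem I must verify that $\Gamma$ is amenable. Here I would use crucially that every element of $\Gamma$ fixes the point $\infty$: realizing $\Gamma$ on the negatively curved homogeneous space $H$ with $\partial H = F^n \cup \{\infty\}$, it acts as a locally compact uniform quasi-isometry (convergence) group fixing the boundary point $\infty$, and such a group is amenable, being compact-by-solvable through its action on the horospheres centered at $\infty$. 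Day's theorem then supplies a fixed point $\mu_0 \in \mathcal C_K$, a $\Gamma$-invariant measurable conformal structure; the normalization needed to pass from invariant inner-product fields to an invariant conformal structure is a point one must treat with care, since projectivizing breaks linearity.

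To finish I would straighten $\mu_0$: construct $f \in QC(F^n)$ with $f^\ast(\text{standard structure}) = \mu_0$ almost everywhere. Granting such an $f$, each conjugate $f\gamma f^{-1}$ then preserves the standard conformal structure almost everywhere, is therefore $1$-quasiconformal, and by quasiconformal rigidity on $F^n$ (where, for $n \ge 3$, a $1$-quasiconformal map is conformal) the group $f\Gamma f^{-1}$ consists of conformal maps, as required.

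The two steps I expect to be genuinely hard are, first, the proof that fixing $\infty$ forces amenability of $\Gamma$ (without which Day's theorem gives nothing) and, second --- the main obstacle --- the straightening: there is no measurable Riemann mapping theorem on $F^n$, so realizing the abstract invariant structure $\mu_0$ by an actual quasiconformal map, together with the final upgrade from $1$-quasiconformal to conformal, is where the specific geometry of the Filiform group and the hypothesis $n \ge 3$ must be brought to bear.
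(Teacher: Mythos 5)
Your proposal follows the classical Sullivan--Tukia route (an invariant measurable conformal structure obtained from a fixed-point/averaging argument, then straightened), whereas the paper does something quite different, and the difference matters: two of your steps are genuine gaps, not just ``hard points.'' First, the straightening step. You correctly identify that there is no measurable Riemann mapping theorem on $F^n$; but this is precisely the step that is known to fail even in the model case $S^n$, $n\ge 3$: Tukia's example \cite{T2} is a uniform quasiconformal group preserving a measurable conformal structure that cannot be integrated, which is exactly why Tukia's theorem needs the cocompactness-on-triples hypothesis. Theorem \ref{fili} has no such hypothesis, so an argument that ends with ``straighten the invariant structure'' cannot close without a substitute for that hypothesis, and you do not supply one. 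Second, your amenability argument is unjustified: fixing the boundary point $\infty$ does not make a uniform quasiconformal group amenable (quasi-isometries fixing $\infty$ only coarsely permute horospheres, and the resulting group is far from compact-by-solvable in general --- this is why amenability appears as an explicit \emph{hypothesis} in Theorem \ref{ProperTukiaThm}). In the paper amenability is \emph{deduced}, via Claim \ref{claim}, from the fact that $QC(F^n)$ is itself solvable.

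Both repairs come from the same missing ingredient, which is the actual engine of the paper's proof: by \cite{X2}, every quasiconformal map of $F^n$ ($n\ge 3$) is a quasisimilarity of the explicit form $h_{a_1,a_2}\circ L_p\circ F_h$, governed by a single Lipschitz function $h:\R\to\R$. This gives solvability (hence amenability) of $QC(F^n)$, reduces the conjugation problem to an additive cocycle with values in the Banach space of Lipschitz functions $h$ with $h(0)=0$ on the induced $\R^2$-action on $V_1$, and it is on \emph{that} space --- not on a space of conformal structures --- that Day's theorem is applied (Theorem \ref{conjtosim}). The fixed point $h_0$ is then automatically realized by the genuine quasiconformal map $F_{h_0}$ of $\mathfrak f^n$, so no integration problem arises, and a Pansu-differential argument plus Lemma 2.5 of \cite{X2} upgrades the conjugated maps to similarities $L_q\circ h_{\epsilon_1,\epsilon_2}\circ\delta_t\circ L_p$. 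Your outline, by contrast, linearizes the wrong object and leaves both the amenability and the realizability of the fixed point unproved.
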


The second case we consider is when $N=\R^n$  but where the parabolic visual metric, denoted $d_A$, is not the usual metric on $\R^n$ but is instead determined 
   by a  matrix $A$ that is  diagonalizable over complex numbers and whose  eigenvalues  have positive real parts.
     This metric is defined in Section \ref{app:sec}. In this case, the analogue of quasiconformal maps are {quasisimilarity} maps. 
  A bijection $F: (X,d)\ra (X,d)$ of a metric space is a   $(M, C)$-\emph{quasisimilarity} ($M\ge 1$, $C>0$)   if  
  $$\frac{C}{M} \cdot  d(x_1, x_2)\le d(F(x_1), F(x_2))\le MC \cdot  d(x_1, x_2)$$
    for all $x_1, x_2\in X$. 
When $M=1$ the map is  called  a \emph{similarity}. 
 A group $\Gamma$ of quasisimilarities of a metric space is {\it{uniform}} if there is some $M\ge 1$ such that  every element
  $\gamma$ of $\Gamma$ is a   $(M, C_\gamma)$-quasisimilarity ($C_\gamma$ may depend on $\gamma$).  
Note that a quasisimilarity map is actually just a biLipschitz map but a uniform group of quasisimilarities is not the same as a uniform group of biLipschitz maps. 

\begin{theorem}\label{ProperTukiaThm} 
  Let $A$ be as above and  $\Gamma$  a 
locally compact    uniform  quasisimilarity  group   
of $(\R^n , d_A)$.  
Suppose in addition that $\Gamma$ is amenable and acts cocompactly on the space  of distinct pairs of $\R^n$.  Let $\bar{A}$ denote  the matrix obtained from the  Jordan form of $A$
  by replacing  the eigenvalues with their real parts. 
Then there exists  a biLipschitz map $f: (\R^n , d_A) \to (\R^n, d_{\bar{A}})$  such 
    that $f\Gamma f^{-1}$ is a group of similarities of $(\R^n, d_{\bar{A}})$. 
\end{theorem}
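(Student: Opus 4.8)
The plan is to follow the Tukia--Sullivan strategy of producing a $\Gamma$-invariant measurable ``similarity structure'' adapted to $d_A$, to upgrade it to a continuous structure using the cocompact action on pairs, and finally to integrate it into the conjugating map $f$. The new ingredient, in place of the classical barycenter/averaging arguments, is Day's fixed point theorem, which is what lets us exploit amenability directly.

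First I would record that every $\gamma\in\Gamma$ is biLipschitz with respect to $d_A$, and that $(\R^n,d_A)$ is locally biLipschitz to $\R^n$ with a Euclidean structure; hence each $\gamma$ is differentiable Lebesgue-a.e., with derivative $D\gamma(x)\in GL_n(\R)$. Let $G\subset GL_n(\R)$ be the group of linear similarities of $(\R^n,d_{\bar A})$, generated by the one-parameter dilation group $e^{t\bar A}$ together with the compact group of linear isometries; the homogeneous space $\mathcal S=GL_n(\R)/G$ is a nonpositively curved symmetric space. The uniform quasisimilarity hypothesis forces the classes $[D\gamma(x)]\in\mathcal S$ to lie in a fixed bounded ball $B\subset\mathcal S$, uniformly in $\gamma$ and for a.e.\ $x$. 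A conjugating map turning $\Gamma$ into similarities corresponds exactly to a measurable section $x\mapsto\sigma(x)\in\mathcal S$ that is equivariant for the natural cocycle action of $\Gamma$, namely $\sigma(\gamma x)=D\gamma(x)\cdot\sigma(x)$ for a.e.\ $x$.

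Next I would build such an equivariant section by applying Day's theorem. The set of measurable sections valued in the bounded ball $B$ carries a convex structure coming from the $CAT(0)$ geometry of $\mathcal S$ (convex combinations via geodesics and barycenters), and, using local compactness of $\Gamma$ together with the weak-$*$ topology on the associated bundle of probability measures, it can be realized as a compact convex subset $\mathcal K$ of a locally convex space on which $\Gamma$ acts by affine homeomorphisms. Since $\Gamma$ is amenable, Day's fixed point theorem produces a fixed point, that is, a $\Gamma$-invariant measurable section $\sigma$, equivalently a $\Gamma$-invariant measurable similarity structure on $(\R^n,d_A)$.

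The main obstacle is the regularity step: promoting $\sigma$ from measurable to (a.e.\ equal to) continuous. Here the cocompact action on the space of distinct pairs is essential. Invariance of $\sigma$ under a cocompact group makes the structure essentially homogeneous, and one then runs the Tukia-type argument (approximate differentiability at density points combined with the uniform distortion bound) to show that $\sigma$ agrees a.e.\ with a continuous section; cocompactness is what prevents the structure from degenerating and pins it down up to the dilation flow. Finally, integrating the continuous invariant structure yields a biLipschitz map $f$ conjugating every $\gamma$ to a similarity. Because $\sigma$ is invariant under the isometry subgroup of $G$, the antisymmetric (rotational) contributions coming from the imaginary parts of the eigenvalues of $A$ are absorbed into the isometry group and play no role in the resulting metric; what survives is precisely the scaling governed by the real parts, so the target metric is $d_{\bar A}$ and $f\Gamma f^{-1}$ consists of similarities of $(\R^n,d_{\bar A})$.
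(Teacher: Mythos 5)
Your plan founders at the very first step. A $d_A$-biLipschitz map of $\R^n$ is \emph{not} locally biLipschitz for the Euclidean metric once the real parts $\alpha_1<\cdots<\alpha_r$ of the eigenvalues are not all equal: by Proposition 4 of \cite{D1} such a map has the triangular form $F(x_1,\dots,x_r)=(f_1(x_1,\dots,x_r),\dots,f_r(x_r))$, where $f_i$ is only $\alpha_i/\alpha_j$-H\"older (exponent $<1$) in $x_j$ for $j>i$. Consequently Rademacher's theorem does not furnish an a.e.\ derivative $D\gamma(x)\in GL_n(\R)$, and the classical Tukia scheme --- a $\Gamma$-equivariant measurable section of $GL_n(\R)/G$, upgraded to continuous at density points by zooming --- never gets started. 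This failure of differentiability transverse to the foliations $V_j\times\cdots\times V_r$ is precisely the obstruction the paper is built to circumvent (see the discussion at the start of Section 1.1), which is why its proof never differentiates elements of $\Gamma$.

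The paper's actual route is different in substance, not just in packaging. It first invokes Theorem 2 of \cite{D1} (this is where cocompactness on pairs and the earlier partial analysis are consumed) to conjugate $\Gamma$ so that every element is a similarity composed with a unipotent ``almost similarity'' $(x_1,\dots,x_r)\mapsto(x_1+h_1(x_2,\dots,x_r),\dots,x_r+h_r)$. It then inducts on the blocks: assuming the induced action on $Y=V_{j+1}\times\cdots\times V_r$ is already by similarities, the induced action on $V_j\times Y$ has exactly the product form required by Theorem \ref{conjtosim}, and Day's theorem is applied not to a bundle of probability measures over a symmetric space but to the isometric affine action of $\Gamma^*$ on the Banach space $E$ of Lipschitz maps $Y\to V_j$ vanishing at the basepoint: the functions $h_\gamma$ form a bounded cocycle, the closed convex hull of its orbit is compact for pointwise convergence, and the resulting fixed point $h_0$ defines the shear $(x_j,y)\mapsto(x_j+h_0(y),y)$ that conjugates that block to similarities. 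To salvage your write-up you would need either to replace the a.e.-differentiability and measurable-structure machinery by this cocycle-on-function-spaces argument, or to supply a substitute for Rademacher adapted to $d_A$ --- which is not available.
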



{\bf Remarks.} 
\begin{enumerate}

\item Both Theorems \ref{fili} and \ref{ProperTukiaThm} can be thought of as \emph{Tukia-type} theorems and like Tukia's original theorem can be used to prove quasi-isometric rigidity results; this time for lattices in certain solvable Lie groups (see Section \ref{app:sec}). 
\item    Theorem \ref{ProperTukiaThm} extends partial results that can be found in \cite{D1}.

\item The two extra assumptions  in Theorem \ref{ProperTukiaThm}, amenability and cocompact action on pairs, are not an obstacle for the quasi-isometric rigidity results, nevertheless it would be interesting to know if they can be removed in certain cases. For Tukia's original theorem on quasiconformal maps of $S^n$ cocompactness on triples is necessary when $n>2$ (see for example \cite{T2}). We only need to assume cocompactness on distinct pairs since our groups fix the point at infinity.  For Theorem \ref{fili} amenability and cocompactness on distinct pairs are not needed as  assumptions because from \cite{X2} we know that maps in $QC(F^n)$ have a particularly nice form  which allows us to deduce that $QC(F^n)$ is itself  solvable  
 (see Claim \ref{claim}).
\item Theorem \ref{fili} could have also been stated as a theorem about uniform quasisimiliary groups since in \cite{X2} it is proved that all quasiconformal maps are in fact quasisimilarities.
\item Theorem \ref{ProperTukiaThm} can be extended to uniform quasisimilarity groups of $\R^n \times \Q_m$     
but we will leave the statement of this result to Theorem \ref{ProperTukia2} in Section \ref{app:sec}.  
\end{enumerate}

%
%
%


\subsection{Day's Theorem and bounded cohomology}

One key feature in studying quasiconformal (resp. quasisimilarity) maps of boundaries of negatively curved homogeneous spaces is that often differentiability fails in certain directions. This forces  quasiconformal and quasisimilarity maps to preserve certain families of foliations. While it is sometimes possible to prove that these maps can be conjugated to act by similarities along the leaves of these foliations, this does not automatically imply that the map is a global similarity map. To solve this problem we appeal to Day's theorem: 

{\bf Theorem. (Day's fixed point theorem) }\cite{Day}
\emph{ Let  $K$ be a compact convex subset  of a locally convex  topological   vector 
space  $E$   and let $\Gamma$ be a locally compact group that acts on $K$ by  affine transformations. If $\Gamma$ is amenable 
  and the action  $\Gamma\times K\ra K,  (\gamma, x)\mapsto \gamma\cdot x,$ is separately continuous,     
then the action of $\Gamma$ has a global fixed point. }

Roughly speaking, the relation between Day's fixed point theorem and a Tukia-type theorem is the following: on the one hand, finding a conjugating map corresponds to the vanishing of the first (bounded) group cohomology; on the other hand there is a well known connection between the vanishing of the first group cohomology and the existence of a global fixed point for an affine action.
This relation is explained in detail in Section \ref{fixedco}.

We now give a simple example to illustrate how Day's theorem can be used to prove a Tukia-type theorem. This example is the simplest case of Theorem \ref{conjtosim} from Section \ref{leaf} which is later used to prove Theorems \ref{fili} and \ref{ProperTukiaThm} in Sections \ref{filiform} and \ref{app:sec}. 

{\bf Example.} Suppose $\Gamma$ is a  locally compact uniform  quasisimilarity group   of $\R^2$    and 
    each $\g\in \Gamma$ has the form 
  $$\g(x,y)=(a_\g(x+c_\g+h_\g(y)), b_\g(y+d_\g)),$$
  where   $a_\g, b_\g\in \R\backslash\{0\}, c_\g,  d_\g\in \R$ and  $h_\g: \R  \ra \R$ is a Lipschitz map that satisfies $h_\g(0)=0$.     It is easy to check that 
   each $\g$ is a biLipschitz map of $\R^2$.  


%
%
In the above example, the locally convex topological vector space is 
  $$E=\{h: \R \ra \R  \;\;  \text{is Lipschitz and}\;\;   h(0)=0\}.$$
 Here $E$ is equipped with the topology of  pointwise convergence.    Similar to the Banach- Alaoglu theorem,  the closed bounded subsets are compact 
in the  topology of  pointwise convergence.    Here the boundedness  is with respect to the norm on $E$   given by the Lipschitz constant.  
  The affince action of $\Gamma$ (actually of the opposite group $\Gamma^*$ of $\Gamma$)  on $E$ is
given by 
$\phi(\g) h=\pi_\g h+h_\g$  for $\gamma\in \Gamma$ and $h\in E$,    where 
$$\pi_\g h(y)=a_\g^{-1}  h (b_\g(y+d_\g))-a_\g^{-1}  h (b_\g d_\g).$$
  The uniformity condition on $\Gamma$  implies  that this action has bounded orbits.   The compact convex subset $K$ is the closed 
 convex hull of an orbit.   

Of course  the claim for the above particular example also   follows from Sullivan's theorem \cite{S}.  
    But the approach in this paper applies in all dimensions and in situations where differentiability of Lipschitz maps fails such as in  Theorem  
     \ref{ProperTukiaThm} above.

%

\subsection{Applications.} 
As mentioned above, Theorems \ref{fili} and \ref{ProperTukiaThm} can be used to prove   
quasi-isometric rigidity results for 
   solvable Lie groups  and their lattices. 
 Additionally, they can be used to prove results on \emph{envelopes} of certain finitely generated solvable groups.  These results are described in Section \ref{app:sec}. 
 As an example, here we mention a quasi-isometric rigidity result for solvable Lie groups.

\noindent
{\bf{Theorem \ref{s5.3.2}}}
{ \it{Let $F^n$  ($n\ge 3$) be the  $n$-th  model Filiform group and $S=F^n\rtimes \R$ the semi-direct product, where the action of $\R$ on $F^n$ is by standard Carnot dilation.
  Then any connected and simply connected solvable Lie group 
 $G$ quasi-isometric to $S$ is isomorphic to $S$. }}

%
%

{\bf{Acknowledgement}}.   This paper was   initiated  when the authors were  attending the quarter program
 ``Random walks on groups"  at the Institute of Henri Poincare from January  to March 2014.   They would like to thank IHP for its hospitality and wonderful working conditions.    The first author acknowledges support from National Science Foundation grant DMS-1207296. The second author acknowledges support from National Science Foundation grant DMS--1265735  and   Simons Foundation grant \#315130. The authors would also like to thank Dave Witte Morris for providing Proposition \ref{witte:prop} and David Fisher for useful conversations.  They thank Yves de Cornulier for comments on an earlier version.   The first author would also like to thank Pierre Pansu for useful conversations.

\section{Vanishing of   Cohomology  and  Fixed Points}\label{fixedco}

In this section we recall the connection  between   vanishing of 
  group cohomology  in dimension one and   the  existence of  global   fixed points  of affine actions.
  We include it here to make this note more self contained.

Let $\Gamma$ be a group and  
 $E$ a $\Gamma$-module. 
  So $E$   is an abelian group and   a homomorphism $\pi: \Gamma\ra \text{Aut}(E)$ is fixed,  where  $\text{Aut}(E)$   is the group of automorphisms of $E$.
      We denote the image of $\g\in \Gamma$ under $\pi$ by $\pi_\g$. 
 A  map $b:  \Gamma \ra  E$,  $\g\mapsto  b_\g$,   is  called  a   $1$-{\it{cocycle}}   if 
$$b_{\g_1\g_2}=b_{\g_1}+\pi_{\g_1} b_{\g_2}\;\;\text{for all}\;\;  \g_1, \g_2\in \Gamma.$$
  Here we denote the group operation in $E$ by addition. 
We  only consider $1$-cocycles, so we will simply call $b$ a cocycle.
  The set  $Z^1(\Gamma, E)$  of all  cocycles is an   abelian group under addition
 given  by $(b+\tilde b)_\g=b_\g+\tilde b_\g$.
A  map $b:  \Gamma \ra  E$ is  called  a  {\it{coboundary}}   if there exists 
$v_0\in E$ such that 
 $$b_\g=v_0-\pi_\g v_0\;\;\text{for  any}\;\;   \g\in \Gamma.$$
  It is easy to check that a coboundary is a cocycle
  and the  set $B^1(\Gamma, E)$ of all coboundaries is a subgroup of $Z^1(\Gamma, E)$.
   The first cohomology of $\Gamma$ with coefficients in $E$ is 
 $$H^1(\Gamma, E)=Z^1(\Gamma, E)/{B^1(\Gamma, E)}.$$

When $E$ is also a metric space (for example, a normed vector space)
  and $\Gamma$ acts on $E$ by uniformly biLipschitz maps (that is, there exists   some $M\ge 1$ such that $d(v_1, v_2)/M\le d(\pi_\g v_1,   \pi_\g v_2)\le M \cdot d(v_1, v_2)$ for all $\g\in \Gamma$, $v_1,  v_2\in E$),
 a   cocycle is said to be  bounded if its image is a bounded subset of $E$. 
  The set $Z^1_b(\Gamma, E)$ of all bounded cocycles is an abelian group under addition. 
 Notice that every    coboundary is a  bounded  cocycle so that $B^1_b(\Gamma, E)= B^1(\Gamma,E)$. 
 The first bounded cohomology of $\Gamma$ with coefficients in $E$ is
  $$H^1_b(\Gamma, E)=Z^1_b(\Gamma, E)/{B^1(\Gamma, E)}.$$

  A  map   $f: E\ra E$ is called an \emph{affine map} if there exists some automorphism 
 $A: E\ra E$ and some  $v_0\in E$ such that $f(v)=A v+v_0$ for all $v\in E$. 
 Let $\text{Aff}(E)$ be the group of all affine maps of  $E$. 
 An \emph{affine action} of $\Gamma$ on $E$ is 
a  homomorphism $\phi: \Gamma\ra  \text{Aff}(E)$.  
 Given an affine action 
$\phi: \Gamma\ra  \text{Aff}(E)$, we define a map
 $\pi: \Gamma\ra \text{Aut}(E)$ by
 $\pi_\g=A$, where $A\in \text{Aut}(E)$ is determined by 
 $\phi(\g)(v)=Av +v_0$. 
  It is easy to check that $\pi$ is a homomorphism.  
   We call $\pi$ the \emph{linear part}  
    of the affine action $\phi$. 


\begin{lemma}\label{l1.1}
Let $E$ be a $\Gamma$-module  with the  $\Gamma$ action on $E$ 
  given by $\pi: \Gamma\ra \text{Aut}(E)$.  \newline
 (1) There is a  bijection between   $Z^1(\Gamma, E)$  and the set of affine actions of $\Gamma$ on $E$ with  linear part  $\pi$;\newline
(2) A   cocycle is a coboundary if and only if the corresponding affine action 
 has a   global fixed point;\newline
(3)   $H^1(\Gamma, E)=0$ if and only if  every affine action with linear part $\pi$ has a   global   fixed point.

\end{lemma}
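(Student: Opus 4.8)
The plan is to set up the explicit correspondence $b \leftrightarrow \phi$ in part (1) and then read off (2) and (3) as essentially immediate consequences, since all three statements are really the same fact phrased three ways.

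For (1), given a cocycle $b \in Z^1(\Gamma, E)$, I would define the candidate affine action by $\phi(\g)(v) = \pi_\g v + b_\g$; each $\phi(\g)$ is by construction affine with linear part $\pi_\g$. The only thing to verify is that $\phi$ is a homomorphism. Expanding $\phi(\g_1)\big(\phi(\g_2)(v)\big)$ yields $\pi_{\g_1}\pi_{\g_2} v + \pi_{\g_1} b_{\g_2} + b_{\g_1}$, and this agrees with $\phi(\g_1\g_2)(v) = \pi_{\g_1\g_2} v + b_{\g_1\g_2}$ precisely because $\pi$ is a homomorphism and $b$ satisfies the cocycle identity $b_{\g_1\g_2} = b_{\g_1} + \pi_{\g_1} b_{\g_2}$. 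Conversely, given an affine action $\phi$ with linear part $\pi$, I would set $b_\g := \phi(\g)(0)$, so that necessarily $\phi(\g)(v) = \pi_\g v + b_\g$; running the same computation in reverse shows that the homomorphism property of $\phi$ forces the cocycle identity on $b$. These two assignments are visibly inverse to one another, which gives the claimed bijection.

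For (2), I would simply translate the fixed-point condition. A point $v_0 \in E$ is fixed by the affine action corresponding to $b$ if and only if $\phi(\g)(v_0) = v_0$ for every $\g$, that is $\pi_\g v_0 + b_\g = v_0$, which rearranges exactly to $b_\g = v_0 - \pi_\g v_0$ for all $\g$ — the definition of $b$ being a coboundary with witness $v_0$. Hence the coboundaries are precisely the cocycles whose associated affine action admits a global fixed point. Part (3) is then a formal consequence: the statement $H^1(\Gamma, E) = 0$ means every cocycle lies in $B^1(\Gamma,E)$, and under the bijection of (1) combined with the characterization in (2) this says exactly that every affine action with linear part $\pi$ has a global fixed point.

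The argument proceeds entirely by unwinding the definitions, so there is no genuine obstacle. The one point that demands care is the bookkeeping in part (1): one must check that the cocycle identity matches the homomorphism condition exactly — not merely up to a transposition of $\g_1$ and $\g_2$ or a change in composition order — since such a slip would produce the cocycle relation for the opposite group rather than for $\Gamma$ itself. (This is precisely the subtlety flagged by the appearance of the opposite group $\Gamma^*$ in the motivating Example of the introduction.)
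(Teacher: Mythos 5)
Your proposal is correct and follows essentially the same route as the paper: the explicit correspondence $\phi(\g)(v)=\pi_\g v+b_\g$ for (1), the direct translation of the fixed-point equation into the coboundary condition for (2), and (3) as a formal consequence. Your closing remark about the order of composition matching the cocycle identity (rather than producing the opposite-group relation) is a worthwhile point of care that the paper's proof handles implicitly in its displayed computation.
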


\begin{proof}
(1)  Given a cocycle  $b: \Gamma \ra E$,  define an affine action  
  $\phi: \Gamma\ra  \text{Aff}(E)$
 of $\Gamma$ on $E$ by
 $$\phi(\g)(v)=\pi_\g  v+b_\g    \;\;  {\text{for}}   \;\;   \g\in \Gamma,  v\in E.$$
Indeed,  $\phi(\g)$ is an affine map and for $\g_1, \g_2\in \Gamma$, we have
 \begin{align*} \phi(\g_1)\circ \phi(\g_2)(v)&=\phi(\g_1)(\pi_{\g_2} v+b_{\g_2})\\
&=\pi_{\g_1} (\pi_{\g_2}v   +b_{\g_2})+b_{\g_1}\\
& =\pi_{\g_1}\pi_{\g_2} v+(b_{\g_1}+\pi_{\g_1} b_{\g_2})\\
&=\pi_{{\g_1}{\g_2}}  v+b_{\g_1\g_2}\\
& =\phi(\g_1\g_2)(v).
\end{align*}

Conversely,  assume   $\phi: \Gamma\ra  \text{Aff}(E)$  is an affine action of  
 $\Gamma$ on $E$   with linear part  $\pi$. 
 For any $\g\in \Gamma$,  there is some  $b_\g\in E$ such that  for any $v\in E$,
 $$\phi(\g)(v)=\pi_\g v+b_\g.$$
  For any $\g_1, \g_2\in \Gamma$, the condition $\phi(\g_1\g_2)=\phi(\g_1)\circ \phi(\g_2)$ implies:
  $b_{\g_1\g_2}=b_{\g_1}+\pi_{\g_1}  b_{\g_2}$. 
    Hence $b: \Gamma   \ra E$ is a 
   cocycle.     Now it is easy to see  that the above correspondence  $b\ra \phi$ is a  bijection 
between   $Z^1(\Gamma, E)$  and the set of affine actions of $\Gamma$ on $E$ with  linear part  $\pi$.  

(2) First assume  that   
  $b: \Gamma\ra E$ is a coboundary.  Then there is some $v_0\in E$ such that
 $b_\g=v_0-\pi_\g v_0$  for all $\g\in \Gamma$.    Then for any $\g\in \Gamma$, we have  
$\phi(\g)(v_0)=\pi_\g v_0+b_\g=v_0$. Hence $v_0$ is a global fixed point of the affine action of $\Gamma$ corresponding to the cocycle $b$.  Conversely,
 assume the affine  action  has a global fixed point $v_0$. Then  $v_0=\phi(\g)(v_0)=\pi_\g   v_0+b_\g$ for any $\g\in \Gamma$.   Hence $b_\g=v_0-\pi_\g v_0$   for all $\g\in \Gamma$  
  and $b$ is a coboundary.

(3)  follows from  (1)  and (2). 

\end{proof}

Similarly we have

\begin{lemma}\label{l1.2}
Let $E$ be a $\Gamma$-module  with the  $\Gamma$   action on $E$  given by $\pi: \Gamma\ra \text{Aut}(E)$. 
   Assume $E$ is also a metric space and   $\Gamma$ acts on $E$ by uniformly biLipschitz maps. \newline
 (1) There is a  bijection between   $Z_b^1(\Gamma, E)$  and the set of affine actions of $\Gamma$ on $E$ with   linear part   $\pi$  and  bounded orbits;\newline
(2) A   bounded   cocycle is a coboundary if and only if the corresponding affine action 
 has a  global  fixed point;\newline
(3)   $H_b^1(\Gamma, E)=0$ if and only if  every affine action with    linear part   $\pi$  and  bounded orbits    has a global  fixed point.

\end{lemma}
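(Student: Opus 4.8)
The plan is to reduce everything to Lemma \ref{l1.1}, since the only new hypotheses here are that $E$ carries a (translation-invariant) metric and that $\Gamma$ acts on it by uniformly biLipschitz automorphisms. Under the bijection $b\leftrightarrow\phi$ of Lemma \ref{l1.1}(1), where $\phi(\g)(v)=\pi_\g v+b_\g$, I claim that $b$ is a bounded cocycle if and only if the affine action $\phi$ has bounded orbits. Granting this equivalence, part (1) is immediate; part (2) follows from Lemma \ref{l1.1}(2) together with the already-noted fact that $B^1_b(\Gamma,E)=B^1(\Gamma,E)$, so that ``$b$ a bounded coboundary'' and ``$b$ a coboundary'' coincide; and part (3) then follows from (1) and (2) exactly as in Lemma \ref{l1.1}(3).

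So the one substantive step is the equivalence ``$b$ bounded $\iff$ $\phi$ has bounded orbits.'' The key observation is that the orbit of the identity $0\in E$ is
\[
\{\phi(\g)(0):\g\in\Gamma\}=\{\pi_\g 0+b_\g:\g\in\Gamma\}=\{b_\g:\g\in\Gamma\},
\]
since each $\pi_\g$ is an automorphism and hence fixes $0$. Thus the orbit through $0$ is exactly the image of the cocycle $b$, and $b$ is bounded precisely when this single orbit is bounded. In particular, if every orbit is bounded then so is $b$.

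For the converse I would upgrade boundedness of the distinguished orbit through $0$ to boundedness of all orbits, and this is exactly where the uniform biLipschitz hypothesis enters. For any $v\in E$ and $\g\in\Gamma$, using translation invariance of the metric and that $\pi_\g$ is $M$-biLipschitz,
\[
d(\phi(\g)(v),b_\g)=d(\pi_\g v+b_\g,\,b_\g)=d(\pi_\g v,0)\le M\,d(v,0),
\]
so $d(\phi(\g)(v),0)\le M\,d(v,0)+d(b_\g,0)$, which is bounded uniformly in $\g$ once $b$ is bounded. Hence every orbit is bounded, completing the equivalence. I do not expect any genuine obstacle here: the content of the lemma is organizational, and the uniform biLipschitz assumption is present precisely so that boundedness of the orbit through the origin propagates to all orbits. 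Without such control one could imagine a bounded cocycle whose associated affine action nonetheless pushes some point off to infinity, so this hypothesis is what makes the bounded statement the exact metric refinement of Lemma \ref{l1.1}.
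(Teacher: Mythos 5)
Your proof is correct and is exactly the argument the paper intends: the paper omits the proof of Lemma \ref{l1.2} entirely (saying only ``Similarly we have''), and the intended content is precisely the reduction to Lemma \ref{l1.1} via the observation that the orbit of $0$ under $\phi$ is the image of the cocycle $b$. Your use of the uniform biLipschitz hypothesis, together with translation-invariance of the metric (which the paper does not state explicitly but which holds in all of its applications, where $E$ is a normed space), to propagate boundedness from the orbit of $0$ to all orbits is the one substantive step, and you have it right.
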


\section{BiLipschitz maps of $\R^n \times Y$}\label{leaf}

In this section we explain how Day's fixed point  theorem can be used to conjugate certain groups of    biLipschitz maps of metric spaces of the form $\R^n \times Y$ into   groups of  \emph{similarities}.

Recall that a similarity  is a map $f:(X,d) \to (X,d)$  such that there is a constant
  $c_f>0 $  for which 
 $d(f(x),f(x'))=c_f d(x,x')$   for all $x,x' \in X$. 


%
 Let $(Y,d)$ be a metric space.  Let $0<\beta\le 1$ and $\R^n$ be  equipped with the metric 
 $|p-q|^\beta$ ($p, q\in \R^n$), where $|\cdot |$ is the usual Euclidean norm. 
 Let  $y_0\in Y$  be a  fixed    base point. 
Let 
 %
 %
$$E=\{ h: (Y, d)   \ra (\R^n, |\cdot|^\beta) \;\;   \text{is Lipschitz and}\;\; 
       h(y_0)=0\}.$$ 
   It is not hard to check that $E$ is a Banach space with  the following norm:
 $$||h||=\sup_{y_1\not= y_2\in Y}\frac{|h(y_1)-h(y_2)|}{d(y_1, y_2)^{\frac{1}{\beta}}}.$$


%


  Let $\R^n\times Y$ be equipped with the metric 
 $$d((x_1, y_1), (x_2, y_2))=\max\{|x_1-x_2|^\beta, d(y_1, y_2)\}\;\; \text{for}\;\; (x_1, y_1), (x_2, y_2)\in \R^n\times Y.$$
Suppose that a  group  $\Gamma$ acts on $\R^n \times Y$ by  biLipschitz maps,
  and for every $\g\in \Gamma$,   there exist 
 $a_\g \in \R_{+}, A_\g \in \text{O}(n)$, $x_\g\in \R^n$, $h_\g\in E$   and a similarity   $\sigma_\g: Y\ra Y$  of $Y$  such that  
$$ \g(x,y) =( a_\g A_\g(x +x_\g+ h_\g(y)), \sigma_\g(y))\;\;\text{for}\;\; (x,y)\in \R^n\times Y.$$
%

\begin{lemma}\label{hislip}
Suppose  $\Gamma$ acts on  $\R^n \times Y$  as described above,  and acts as a 
uniform quasisimilarity group.
Then:\newline
 (1)  for any $\g\in \Gamma$, the similarity constant of $\sigma_\g$ is $a_\g^\beta$; that is, 
  $d(\sigma_\g(y_1), \sigma_\g(y_2))= a_\g^\beta\cdot d(y_1, y_2)$ for any $y_1, y_2\in Y$;\newline
  (2)    there is a constant $C>0$ such that   $h_\g: (Y, d)   \ra (\R^n, |\cdot|^\beta)$   
is $C$-Lipschitz for every $\g \in \Gamma$.

\end{lemma}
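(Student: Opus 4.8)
The plan is to exploit the explicit product form of each $\g$ together with the fact that the similarity constant of $\g$ restricted to a single $\R^n$-leaf is forced to equal $a_\g^\beta$. Writing $c_\g$ for the similarity constant of $\sigma_\g$, for $p_i=(x_i,y_i)$ I would first compute
$$d(\g(p_1),\g(p_2))=\max\{\,a_\g^\beta\,|(x_1-x_2)+(h_\g(y_1)-h_\g(y_2))|^\beta,\; c_\g\, d(y_1,y_2)\,\},$$
using that $A_\g$ is orthogonal (hence an isometry of $(\R^n,|\cdot|^\beta)$) and that the factor $a_\g$ scales this metric by $a_\g^\beta$. Restricting to a single leaf $\R^n\times\{y\}$ (so $y_1=y_2$) makes the $Y$-term vanish and the $h_\g$-contribution cancel, giving the exact identity $d(\g(x_1,y),\g(x_2,y))=a_\g^\beta\,|x_1-x_2|^\beta$. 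Feeding this into the $(M,C_\g)$-quasisimilarity inequality yields $a_\g^\beta/M\le C_\g\le M a_\g^\beta$, the quantitative link between the leaf-scaling $a_\g^\beta$ and the global constant $C_\g$ that drives both parts.

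For (1), I would upgrade the mere comparison $c_\g\asymp a_\g^\beta$ to an exact equality by iterating and using that the same uniform $M$ controls every element of the group. Since the $Y$-component of $\g^k$ is $\sigma_\g^{\,k}$ and its leaf-scaling is $a_\g^{k\beta}$, we have $c_{\g^k}=c_\g^{\,k}$ and $a_{\g^k}=a_\g^{\,k}$, and the leaf computation applied to the element $\g^k\in\Gamma$ gives $C_{\g^k}\le M a_\g^{k\beta}$. Testing on a pair with equal $x$-coordinate, the image distance is at least $c_\g^{\,k} d(y_1,y_2)$ and at most $M C_{\g^k} d(y_1,y_2)\le M^2 a_\g^{k\beta}d(y_1,y_2)$, so $c_\g^{\,k}\le M^2 a_\g^{k\beta}$ for every $k$. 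Extracting $k$-th roots and letting $k\to\infty$ kills the factor $M^{2/k}\to1$ and forces $c_\g\le a_\g^\beta$; applying the same bound to $\g^{-1}\in\Gamma$ (for which $a_{\g^{-1}}=a_\g^{-1}$ and $c_{\g^{-1}}=c_\g^{-1}$) gives the reverse inequality, whence $c_\g=a_\g^\beta$.

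For (2), I would again test on a pair $(x,y_1),(x,y_2)$ sharing the same $x$-coordinate. The image distance dominates its $\R^n$-component, so
$$a_\g^\beta\,|h_\g(y_1)-h_\g(y_2)|^\beta\le d(\g(x,y_1),\g(x,y_2))\le M C_\g\, d(y_1,y_2)\le M^2 a_\g^\beta\, d(y_1,y_2),$$
using $C_\g\le M a_\g^\beta$ from the first step. Cancelling the common factor $a_\g^\beta>0$ leaves $|h_\g(y_1)-h_\g(y_2)|^\beta\le M^2 d(y_1,y_2)$ for all $y_1,y_2\in Y$ and all $\g\in\Gamma$, i.e. every $h_\g$ is $M^2$-Lipschitz as a map $(Y,d)\to(\R^n,|\cdot|^\beta)$, so $C=M^2$ works; note this part does not require (1).

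The distance computation and the leaf restriction are routine; the one genuinely load-bearing idea is the iteration in (1), where exactness of $c_\g=a_\g^\beta$ is extracted from a merely biLipschitz comparison by running the single uniform constant $M$ through all powers $\g^k$ and passing to the limit. This is the step I expect to be the main obstacle to phrase cleanly, since it is precisely where the group structure and the uniformity hypothesis are indispensable: a single quasisimilarity, or a non-uniform family, would only yield $c_\g\asymp a_\g^\beta$.
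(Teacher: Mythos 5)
Your proposal is correct and follows essentially the same route as the paper: restrict to horizontal and vertical pairs to get $C_\g/M\le a_\g^\beta\le MC_\g$ and $C_\g/M\le c_\g\le MC_\g$, then use powers of $\g$ (the paper phrases this as ``$(c_\g/a_\g^\beta)^m$ is bounded for all $m\in\Z$, hence equals $1$,'' which is the same as your $k$-th-root limit) to upgrade the comparison to the exact equality $c_\g=a_\g^\beta$, and the same equal-$x$ test with the bound $C_\g\le Ma_\g^\beta$ gives the uniform $M^2$-Lipschitz bound on $h_\g$ in part (2).
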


\begin{proof}
  (1)   Since $\Gamma$ acts  as a uniform quasisimilarity group, there is a constant $M\ge 1$ such that 
every $\gamma \in \Gamma$ is   a $(M,C_\gamma)$-quasisimilarity.  
   Denote by $\tilde a_\g$ the similarity constant of 
 $\sigma_\g$.     We shall show $\tilde a_\g=a_\g^\beta$.   
   Fix $x\in \R^n$  and  $y_1\not= y_2\in Y$,    and    let $p=(x, y_1)$, $q=(x, y_2)$.
  Then  $d(p, q)=d(y_1, y_2)$.  
     Hence:   
$$\tilde a_\g\cdot d(y_1, y_2) =d(\sigma_\g(y_1), \sigma_\g(y_2))\le d(\g(p), \g(q))\le  M C_\g\cdot d(p, q)=M C_\g \cdot d(y_1, y_2).$$
 So    $\tilde a_\g\le M C_\g$.  By considering  $\g^{-1}$ we obtain 
  $$\frac{1}{\tilde a_\g}=\tilde a_{\g^{-1}}\le M C_{\g^{-1}}=M\cdot \frac{1}{C_\g}.$$
    Hence  
\begin{equation}\label{e3.1}
 \frac{C_\g}{M}\le  \tilde a_\g\le M C_\g \;\;\text{for all}\;\; \g\in \Gamma.
\end{equation}
    Next we    fix   $y\in Y$ and $x_1\not=x_2\in \R^n$,   and  let $p=(x_1, y)$,  $q=(x_2, y)$.   Then  $d(p, q)=|x_1-x_2|^\beta$  and  $d(\g(p), \g(q))=(a_\g\cdot |x_1-x_2|)^\beta$.  
The quasisimilarity condition on $\g$ implies:
$$ \frac{C_\g}{M}\cdot  |x_1-x_2|^\beta= \frac{C_\g}{M}\cdot  d(p, q)\le  (a_\g\cdot |x_1-x_2|)^\beta=d(\g(p), \g(q))\le M C_\g \cdot d(p, q)=M C_\g \cdot |x_1-x_2|^\beta.$$
  Hence  
\begin{equation}\label{e3.2}
 \frac{C_\g}{M}\le  a_\g^\beta\le M C_\g \;\;\text{for all}\;\; \g\in \Gamma.
\end{equation}
  Inequalities  (\ref{e3.1}) and  (\ref{e3.2})   imply 
\begin{equation}\label{e3.3}
 \frac{1}{M^2}\le  \frac{\tilde a_\g}{a_\g^\beta}  \le M^2  \;\;\text{for all}\;\; \g\in \Gamma.
\end{equation}
 In particular,  (\ref{e3.3})  holds for $\g^m$ for any $m\in \mathbb Z$. Since 
 $a_{\g^m}=a_\g^m$ and  $\tilde a_{\g^m}=\tilde a_\g^m$, 
    we must have $\tilde a_\g=a_\g^\beta$.

(2)         Let   $y_1,   y_2\in Y$   be arbitrary.     Fix    $x\in \R^n$   and   let $p=(x, y_1)$, $q=(x, y_2)$.
  Then  $d(p, q)=d(y_1, y_2)$.  
  The quasisimilarity condition on $\g$ implies:
   \begin{align*}
(a_\g\cdot |h_\g(y_1)-h_\g(y_2)|)^\beta&=|a_\g A_\g (x+x_\g+h_\g(y_1))-a_\g A_\g (x+x_\g+h_\g(y_2))|^\beta\\
 & \le
d(\g(p), \g(q))\\
 & \le M C_\g \cdot d(p, q)\\
& =M C_\g \cdot d(y_1, y_2).
\end{align*}
   Now   inequality   (\ref{e3.2}) implies  $ |h_\g(y_1)-h_\g(y_2)|^\beta\le M^2 \cdot d(y_1, y_2)$.

\end{proof}


 Let  $\Gamma^*$ be the opposite group of $\Gamma$. 
 We denote the group operation in $\Gamma^*$ by $\gamma_1 *\gamma_2$ (and the group operation in $\Gamma$ by $\gamma_1 \gamma_2$).
     We define a $\Gamma^*$-module structure 
    $\pi:  \Gamma^*\ra \text{Aut}(E)$   on $E$  as follows. For $\g\in \Gamma^*$, $h\in E$ and $y\in Y$,   define  
\begin{equation}\label{IAeqn}(\pi_\g h)(y)= a_\g^{-1} A_{\g}^{-1}  h(\sigma_\g(y))-a_\g^{-1} A_{\g}^{-1}  h(\sigma_\g(y_0)).
\end{equation}
Using   Lemma \ref{hislip}  (1)      it is not difficulty  to check that $\Gamma^*$ acts on $E$ by linear isometries.

Now define a map $b: \Gamma^*\ra E$ by
 $b\g=h_\gamma.$
   By comparing the two sides of  $(\g_1\g_2)(x,y)=\g_1(\g_2(x,y))$,
  we obtain    $h_{\g_1\g_2}=h_{\g_2}+\pi_{\g_2}h_{\g_1}$.
   In other words,      $b_{\g_2*\g_1}=b_{\g_2}+\pi_{\g_2}b_{\g_1}$.
  Hence 
  $b: \Gamma^*\ra E$ is a   cocycle.  


%

%
\begin{lemma}\label{conjcallem} 
   Let $\Gamma$   and $b$   be  as above.  
  If the cocycle $b$ is a coboundary, then  
 there exists some $h_0\in E$ such that 
if we denote by $H_0:   \R^n\times Y\ra \R^n\times Y$  the biLipschitz map   given by   
$$H_0(x,y)=( x+ h_0(y), y), $$
  then every element of  $H_0 \Gamma H_0^{-1}\subset \text{Homeo}(\R^n\times Y)$
         is a 
 similarity.
\end{lemma}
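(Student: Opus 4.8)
The plan is to use the coboundary hypothesis to produce the conjugating function explicitly, and then to verify by a direct computation that conjugation by $H_0$ strips away all of the $h$-terms. By the definition of coboundary from Section~\ref{fixedco}, saying that $b$ is a coboundary means there is some $h_0\in E$ with $b_\g = h_0 - \pi_\g h_0$ for every $\g\in\Gamma^*$. Since $b_\g = h_\g$, this reads, after unwinding the definition \eqref{IAeqn} of $\pi_\g$,
$$h_\g(y) = h_0(y) - a_\g^{-1}A_\g^{-1}h_0(\sigma_\g(y)) + a_\g^{-1}A_\g^{-1}h_0(\sigma_\g(y_0)).$$
This single identity is what drives the whole proof.

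First I would record that $H_0(x,y)=(x+h_0(y),y)$ is biLipschitz, which is immediate from $h_0$ being Lipschitz, with inverse $H_0^{-1}(x,y)=(x-h_0(y),y)$. Next I would compute $H_0\g H_0^{-1}$ for a fixed $\g\in\Gamma$ by composing the three maps: the second coordinate is unaffected and equals $\sigma_\g(y)$, while the first coordinate becomes
$$a_\g A_\g\bigl(x - h_0(y) + x_\g + h_\g(y)\bigr) + h_0(\sigma_\g(y)).$$

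The key step is to substitute the coboundary identity into this first coordinate. Multiplying that identity through by $a_\g A_\g$ gives $a_\g A_\g\bigl(h_\g(y)-h_0(y)\bigr) = -h_0(\sigma_\g(y)) + h_0(\sigma_\g(y_0))$, so that all of the $y$-dependent terms cancel and the first coordinate collapses to $a_\g A_\g(x+x_\g) + h_0(\sigma_\g(y_0))$, a constant plus the linear map $a_\g A_\g$ applied to $x$. Thus $H_0\g H_0^{-1}(x,y) = \bigl(a_\g A_\g x + v_\g,\ \sigma_\g(y)\bigr)$ for a constant vector $v_\g$ depending only on $\g$.

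Finally I would check that such a map is a similarity of $(\R^n\times Y, d)$. On the $\R^n$ factor, $A_\g\in \text{O}(n)$ is an isometry, so in the $|\cdot|^\beta$ metric distances there are scaled by exactly $a_\g^\beta$; on the $Y$ factor, Lemma~\ref{hislip}(1) says $\sigma_\g$ has similarity constant $a_\g^\beta$. Since both factors are scaled by the identical constant $a_\g^\beta$, the $\max$-metric $d$ is scaled by $a_\g^\beta$ as well, and so $H_0\g H_0^{-1}$ is a similarity with constant $a_\g^\beta$. The only real subtlety — and precisely the reason Lemma~\ref{hislip}(1) is invoked rather than mere biLipschitz bounds — is that these two scaling constants must coincide; without that matching, conjugation would yield only a biLipschitz map of product form and not a genuine similarity.
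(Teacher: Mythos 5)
Your proof is correct and follows essentially the same route as the paper: extract $h_0$ from the coboundary identity, compute $H_0\g H_0^{-1}$ directly, observe that the $y$-dependent terms in the first coordinate cancel, and invoke Lemma \ref{hislip}(1) to match the scaling constants on the two factors. The only difference is that you spell out the final verification that a product map with matching similarity constants is a similarity of the $\max$-metric, which the paper leaves implicit.
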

\begin{proof} 
Since $b$ is a coboundary, there is some $h_0\in E$ such that
  $h_\g=b_\g=h_0-\pi_\g h_0$ for all $\g\in \Gamma$.  
 Now for any $\g\in \Gamma$, we  have 
 \begin{align*}
H_0\circ \g\circ H_0^{-1} (x,y)&=H_0\circ \g(x-h_0(y), y)\\
&=H_0(a_\g A_\g[x-h_0(y)+x_\g+h_\g(y)], \sigma_\g(y))\\
&=(a_\g A_\g [x-h_0(y)+x_\g+h_\g(y)+a_\g^{-1}A_\g^{-1}h_0(\sigma_\g(y))],  \sigma_\g(y))\\
&=(a_\g A_\g [x-h_0(y)+x_\g+h_\g(y)+\pi_\g h_0 (y)+a_\g^{-1}A_\g^{-1}h_0(\sigma_\g(y_0))],  \sigma_\g(y))\\
&=(a_\g A_\g [x+x_\g+a_\g^{-1}A_\g^{-1}h_0(\sigma_\g(y_0))],   \sigma_\g(y)).
\end{align*}
  Notice that $x_\g+a_\g^{-1}A_\g^{-1}h_0(\sigma_\g(y_0))$ is a constant vector in $\R^n$ (independent of $(x,y)$). 
 So   
 $$x\mapsto a_\g A_\g [x+x_\g+a_\g^{-1}A_\g^{-1}h_0(\sigma_\g(y_0))]$$
     is a similarity of $\R^n$.   
  Now Lemma \ref{hislip}  (1) implies   that   $H_0\circ \g\circ H_0^{-1}$  
is a similarity of  $\R^n\times Y$.

\end{proof}

Recall that our goal is 
to show that  $\Gamma$ can be conjugated into a group of similarities.
By  Lemma \ref{l1.1}  and Lemma \ref{conjcallem},  it now suffices to show that  the  affine action 
corresponding to the cocycle $b$  has a   global fixed point.    This is where Day's fixed point  theorem   can  be useful.

\begin{theorem}\label{conjtosim}  
  Let $Y$ be a metric space,   $0<\beta\le 1$    and $\R^n \times Y$   be   equipped with the metric
$$d((x_1, y_1), (x_2, y_2))=\max\{|x_1-x_2|^\beta, d(y_1, y_2)\}\;\; \text{for}\;\; (x_1, y_1), (x_2, y_2)\in \R^n\times Y.$$
Suppose that a  locally compact group  $\Gamma$   acts 
  continuously  on $\R^n \times Y$ by  biLipschitz maps,
  and for every $\g\in \Gamma$,   there exist 
 $a_\g \in \R_{+}, A_\g \in \text{O}(n)$, $x_\g\in \R^n$, $h_\g\in E$   and a similarity   $\sigma_\g: Y\ra Y$  of $Y$  such that  
$$ \g(x,y) =( a_\g A_\g(x +x_\g+ h_\g(y)), \sigma_\g(y))\;\;\text{for}\;\; (x,  y)\in \R^n\times   Y.$$
     If $\Gamma$ is amenable   and  is a uniform quasisimilarity group  of  $\R^n \times Y$,   
  then  
 there exists some $h_0\in E$ such that 
if we denote by $H_0:   \R^n\times Y\ra \R^n\times Y$  the biLipschitz map   given by   
$$H_0(x,y)=( x+ h_0(y), y), $$
  then every element of  $H_0 \Gamma H_0^{-1}\subset \text{Homeo}(\R^n\times Y)$
         is a 
 similarity.
\end{theorem}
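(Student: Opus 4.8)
The plan is to assemble the machinery already developed in this section into a single application of Day's fixed point theorem. By Lemmas \ref{conjcallem} and \ref{l1.1}, the desired conclusion follows once we show that the cocycle $b:\Gamma^*\to E$ given by $b_\g=h_\g$ is a coboundary, equivalently that the associated affine action $\phi(\g)h=\pi_\g h+h_\g$ of $\Gamma^*$ on the Banach space $E$ has a global fixed point. So the entire task reduces to verifying the hypotheses of Day's theorem for a suitable compact convex set.

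First I would fix the topology: rather than the norm topology on $E$, I will use the topology of pointwise convergence on $E$, in which (by an Arzel\`a--Ascoli / Banach--Alaoglu style argument, as flagged in the Introduction's Example) closed, norm-bounded, uniformly Lipschitz subsets are compact. Next I would produce a bounded orbit. By Lemma \ref{hislip}(1) the linear part $\pi$ acts by isometries of $(E,\|\cdot\|)$, and by Lemma \ref{hislip}(2) there is a uniform constant $C$ with $\|h_\g\|=\|b_\g\|\le$ const for all $\g$; hence the orbit $\{\phi(\g)h_*:\g\in\Gamma^*\}$ of any point $h_*$ is norm-bounded, and moreover the functions in this orbit are uniformly $C'$-Lipschitz for a single constant $C'$. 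I would then take $K$ to be the closed convex hull (in the pointwise topology) of one such orbit; since both norm-boundedness and the uniform Lipschitz bound are preserved under convex combinations and pointwise limits, $K$ is a compact convex $\Gamma^*$-invariant subset of $E$, and $\Gamma^*$ acts on it by affine maps.

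It then remains to check the separate continuity hypothesis of Day's theorem for the action $\Gamma^*\times K\to K$. For fixed $\g$, the map $h\mapsto\pi_\g h+h_\g$ is affine and continuous in the pointwise topology by its explicit formula \eqref{IAeqn}. For fixed $h\in K$, continuity in $\g$ amounts to showing that $\g\mapsto a_\g^{-1}A_\g^{-1}h(\sigma_\g(y))$ and $\g\mapsto h_\g(y)$ vary continuously for each fixed $y$; this I would extract from the standing hypothesis that $\Gamma$ acts \emph{continuously} on $\R^n\times Y$, which forces the parameters $a_\g,A_\g,x_\g,\sigma_\g$ and the values $h_\g(y)$ to depend continuously on $\g$. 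Finally, $\Gamma$ (hence $\Gamma^*$) is amenable and locally compact by hypothesis, so Day's theorem applies and yields a fixed point $h_0\in K\subset E$. Feeding $h_0$ back through Lemma \ref{conjcallem} produces the conjugating map $H_0$ and completes the proof.

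The main obstacle I anticipate is the separate-continuity verification, specifically translating abstract continuity of the $\Gamma$-action on $\R^n\times Y$ into continuity of the individual coordinate data $\g\mapsto(a_\g,A_\g,x_\g,\sigma_\g,h_\g)$ and then into pointwise continuity of $\g\mapsto\pi_\g h+h_\g$; one must be careful that the decomposition $\g(x,y)=(a_\g A_\g(x+x_\g+h_\g(y)),\sigma_\g(y))$ determines its parameters continuously (uniqueness of the decomposition and continuity of $a_\g^\beta$ via Lemma \ref{hislip}(1)). The compactness step is comparatively routine given the two estimates in Lemma \ref{hislip}, and the amenability step is immediate from the hypotheses.
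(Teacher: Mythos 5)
Your proposal is correct and follows essentially the same route as the paper's proof: reduce to finding a fixed point of the affine $\Gamma^*$-action on $E$, equip $E$ with the topology of pointwise convergence so that norm-bounded closed sets are compact (the paper does this via Tychonoff's theorem on a product of closed balls), take $K$ to be the closed convex hull of a bounded orbit supplied by Lemma \ref{hislip}, verify separate continuity and amenability, and apply Day's theorem together with Lemmas \ref{l1.2} and \ref{conjcallem}. The only cosmetic difference is that the paper takes the orbit of $0\in E$ rather than of an arbitrary point.
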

\begin{proof}
We equip  the vector space $E$ with the topology of pointwise convergence.  
It is easy to check that $E$ is a locally convex topological vector space. 
  Notice that   $E\subset (\R^n)^Y$ is a subset of the space of all maps from $Y$ to $\R^n$   and  
the topology of pointwise convergence on $E$ is the same as the subspace topology 
 $E\subset (\R^n)^Y$,    where $(\R^n)^Y$  has the product topology.

We observe that  closed balls in $E$ are compact  in the topology of pointwise convergence (this is similar to the fact that  closed balls in the dual space of a normed vector space is compact in the weak$^*$ topology). 
 Indeed,    for $r>0$,   the closed ball $B_r=\{h\in E: ||h||\le r\}$   is  a closed subset of 
 $$A:=\prod_{y\in Y} \bar{B}(0,  r\cdot (d(y, y_0))^{\frac{1}{\beta}})\subset (\R^n)^Y,$$ 
     where 
 $\bar{B}(0,  r\cdot  (d(y, y_0))^{\frac{1}{\beta}})$ is the closed ball in $\R^n$.  By Tychonoff's theorem, $A$ is compact.  As a closed subset of $A$,  the set   $B_r$ is also compact. 
  Consequently, all closed bounded subsets of $E$ are compact in the topology of pointwise convergence.

By Lemma \ref{hislip} (2),    there is a constant $C>0$ such that 
  $||h_\g||\le C$ for all $\g\in \Gamma^*$.    So the cocycle $b$ is bounded.
   By  Lemma \ref{l1.2},   the affine action of $\Gamma^*$ on $E$ corresponding to $b$   has bounded orbits. 
 This is also  easy to see directly since the orbit of  $0\in E$ is   $\{h_\g: \g\in \Gamma^*\}\subset B_{C}$. 
  Let $K\subset  E$ be the closed convex hull  of this orbit.  By the preceding paragraph, 
    $K$ is a compact convex subset of $E$.  Notice that $\Gamma^*$ acts on $K$ by  isometric   affine transformations.     
  Since $\Gamma$   acts 
  continuously  on $\R^n \times Y$, 
   it is easy to check that the  action 
of $\Gamma^*$  on $K$ 
    is separately continuous.    Finally $\Gamma^*$ is amenable since $\Gamma$ is. 
 Now all the conditions in Day's theorem are satisfied and so this affine action has a global fixed point.     Theorem  \ref{conjtosim}  
 follows from Lemma \ref{l1.2} (2)  and Lemma \ref{conjcallem}.

%
\end{proof}

\section{Quasiconformal groups of model Filiform groups}\label{filiform}

In this section we show through an example that   Theorem \ref{conjtosim}
  is more applicable than it appears.   The point is that the space that  $\Gamma$ acts on does not have to be a product like $\R^n\times Y$.  
  Specifically
 we show how to use Theorem \ref{conjtosim}   to  prove Theorem \ref{fili}.     

%

The $n$-step ($n\ge 2$)  model Filiform algebra $\mathfrak{f}^n$    is an $(n+1)$-dimensional  real  Lie 
 algebra.  
  It has a   basis $\{e_1, e_2, \cdots, e_{n+1}\}$ 
  and  the only non-trivial bracket relations are 
    $[e_1, e_j]=e_{j+1}$  for $2\le j\le n$.    
    The Lie algebra   $\mathfrak{f}^n$   admits a  direct sum decomposition of vector subspaces
  $\mathfrak{f}^n=V_1\oplus \cdots \oplus V_{n}$, where $V_1$ is the  linear subspace spanned by $e_1, e_2$, and $V_j$ ($2\le j\le n$)  is the linear subspace spanned by $e_{j+1}$.     It is easy to  check that 
  $[V_1, V_j]=V_{j+1}$ for $1\le j\le n$,  where $V_{n+1}=\{0\}$.   Hence $\mathfrak{f}^n$  is a 
  stratified      Lie algebra. 
 The connected and simply connected Lie group with Lie algebra 
$\mathfrak{f}^n$    will be denoted by  $F^n$  and is called the 
   $n$-step   model Filiform  group.

A group $\Gamma$ of quasiconformal maps of a metric space $X$ is called a uniform 
quasiconformal group if there is some $K\ge 1$ such that every element $\gamma$ of 
$\Gamma$ is  $K$-quasiconformal.
  By Theorem 4.7 in \cite{HK},  every $K$-quasiconformal map 
$f: N\ra N$ of a Carnot group (equipped with a left invariant Carnot metric) 
    is $\eta$-quasisymmetric, where $\eta: [0, \infty)\ra [0, \infty)
$  is a homeomorphism depending only on  $K$ and $N$.     By  Lemma 3.10  in \cite{X2},
   every $\eta$-quasisymmetric map $f: F^n\ra F^n$  for $n \ge 3$  is a $(M, C)$-quasisimilarity,
 where $M$ depends only on $\eta$.    It follows that every uniform quasiconformal group of 
 $F^n$ ($n\ge 3$)  is a uniform quasisimilarity group.




 Recall that, for a connected and simply connected nilpotent Lie group 
 $N$ with Lie algebra  $\mathfrak n$, the exponential map
 $\text{exp}: \mathfrak n\ra N$ is a diffeomorphism. We shall   identify 
 $\mathfrak f^n$ with $F^n$  via the exponential map. 
  For any $p\in \mathfrak f^n$, let $L_p: \mathfrak f^n\ra \mathfrak f^n$, $L_p(x)=p*x$   be the left translation by $p$.
    For  $a_1, a_2\in \mathbb{R}\backslash\{0\}$, let 
 $h_{a_1, a_2}:   \mathfrak{f}^n\ra \mathfrak{f}^n$ be the graded automorphism given
  by   
$$h_{a_1, a_2}(e_1)=a_1e_1,$$
 $$h_{a_1, a_2}(e_j)=a_1^{j-2} a_2 e_j  \;\;\text{for}\;\; 2\le j\le n+1.$$
 For any Lipschitz function $h:\mathbb  R\ra \mathbb  R$, define $h_j:\mathbb   R\ra \mathbb  R$ ($2\le j\le n+1$)   inductively by $h_2=h$,
 $$h_j(x)=-\int_0^x h_{j-1}(s) ds,  \;\; j=3,\cdots, n+1.$$
 Let $F_h: \mathfrak f^n\ra \mathfrak f^n$ be given by
 $$F_h(x)=x*\sum_{j=2}^{n+1}h_j(x_1)e_j,$$
  where $x=\sum_{j=1}^{n+1} x_j e_j.$
  
Let   $V_1$ be equipped with the inner product with $e_1, e_2$ as orthonormal basis,  and $\mathfrak f^n$ be equipped with 
the Carnot  metric   determined by  this inner product.  It is easy to check that   $h_{\epsilon_1,   \epsilon_2}$   with 
$\epsilon_1, \epsilon_2\in \{1, -1\}$ is an isometry of   
$\mathfrak f^n$.   
  Recall that the standard Carnot dilation 
$\delta_t:  \mathfrak f^n\ra  \mathfrak f^n$ ($t>0$)   is defined by
 $\delta_t(v)=t^j v$ for $v\in V_j$. 
 They are similarities with respect to   the Carnot metric:  
 $d(\delta_t(p), \delta_t(q))=t\cdot d(p,q)$ for any $p, q\in \mathfrak f^n$.  
 Since  $\delta_t$ ($t>0$) is a similarity and left translations are isometries  of   
$\mathfrak f^n$,   the group  $Q$ generated by   
$h_{\epsilon_1,   \epsilon_2}$   
 ($\epsilon_1, \epsilon_2\in \{1, -1\}$),  $\delta_t$ ($t>0$)  and left translations  consists of similarities of 
$\mathfrak f^n$. In fact, it is not hard to see that $Q$ is the group 
$\text{Sim}(\mathfrak f^n)$   of similarities of $\mathfrak f^n$. 
  Notice that   the identity component   $Q_0$  of  $\text{Sim}(\mathfrak f^n)$  
  consists of maps of the form $L_p\circ \delta_t$ ($p\in  \mathfrak f^n$, $t>0$),  
 and that $\text{Sim}(\mathfrak f^n)$    
has 4 connected components    $h_{\epsilon_1,   \epsilon_2}Q_0$.  

{\bf{Proof of   Theorem  \ref{fili}}}.  
    Let   $\Gamma\subset QC(\mathfrak f^n)$ be a locally compact uniform quasiconformal group.
 As   indicated   above,  $\Gamma$ is a uniform quasisimilarity group.  
By Theorem 1.1 of   \cite{X2},  every quasiconformal map $F: \mathfrak f^n\ra 
\mathfrak f^n$ has the form
 $F= h_{a_1, a_2}\circ   L_p\circ F_h$,  where  
$a_1, a_2\in \mathbb R\backslash\{0\}$,  $p\in \mathfrak f^n$  and $h: \mathbb R\ra \mathbb R$  is  a  
Lipschitz function.  
  It follows that 
every quasiconformal map $F: \mathfrak f^n\ra 
\mathfrak f^n$  induces  a biLipschitz map  $f$ of $V_1=\R e_2\times Y$  (with $Y=\R e_1$)
 of the following form 
$$f(x_2 e_2 + x_1 e_1)= a_2(x_2 +b+ h(x_1)) e_2+ a_1 (x_1+a) e_1,$$
  where $a, b\in \R$,  $a_2, a_1\in \R\backslash\{0\}$  and  $h: \R\ra \R$ is Lipschitz. 
  By replacing  $h$ with $h-h(0)$ and  $b$ with $b+h(0)$ we may assume $h(0)=0$. 
  Furthermore, it is not difficulty to show that  if $F$ is a $(M, C)$-quasisimilarity, then so is $f$. 
 Consequently,     the map  $\rho: \Gamma\ra  Homeo(\R^2)$, $F \mapsto f$,  defines an action of $\Gamma$ on $\R^2$ and 
   $\rho(\Gamma)$   is 
 a uniform quasisimilarity group.  
  Lemma \ref{hislip} (1) implies that 
  $|a_1|=|a_2|$.


\begin{claim}\label{claim}   $QC(\mathfrak f^n)$ is a  solvable  group. \end{claim}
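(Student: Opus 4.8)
The plan is to exploit the normal form recalled above, $F=h_{a_1,a_2}\circ L_p\circ F_h$, together with the homomorphism $\rho:QC(\mathfrak f^n)\to \text{Homeo}(\R^2)$, $F\mapsto f$, already introduced in the proof of Theorem \ref{fili}, where $f$ records the $e_1,e_2$--coordinates of $F$ and has the explicit form $f(x_1e_1+x_2e_2)=a_1(x_1+a)\,e_1+a_2(x_2+b+h(x_1))\,e_2$. First I would check that $\rho$ is genuinely a homomorphism on all of $QC(\mathfrak f^n)$: from the normal form, the $V_1=\operatorname{span}(e_1,e_2)$--coordinates of $F(x)$ depend only on the $V_1$--coordinates of $x$, since each factor $F_h$, $L_p$, $h_{a_1,a_2}$ is triangular with respect to the stratification $\mathfrak f^n=V_1\oplus\cdots\oplus V_n$ (for $L_p$ this is just that left translation acts by translation on the abelianization $V_1=\mathfrak f^n/[\mathfrak f^n,\mathfrak f^n]$). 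Hence restriction to $V_1$ commutes with composition, and $\rho$ is a homomorphism. Because an extension of a solvable group by a solvable group is solvable, it then suffices to prove that $\ker\rho$ and $\operatorname{im}\rho$ are each solvable.

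For the kernel, reading off the explicit form of $f$ shows that $\rho(F)=\mathrm{id}$ forces $a_1=a_2=1$, $a=b=0$ and $h\equiv 0$ (using the normalization $h(0)=0$). Then $h_{a_1,a_2}$ and $F_h$ are the identity, so $F=L_p$, and $\rho(L_p)$ being the identity on $V_1$ means $p\in[\mathfrak f^n,\mathfrak f^n]$. Thus $\ker\rho=\{L_p:p\in[\mathfrak f^n,\mathfrak f^n]\}$, a group of left translations isomorphic to the subgroup $\exp[\mathfrak f^n,\mathfrak f^n]$ of $F^n$; since $F^n$ is nilpotent, $\ker\rho$ is nilpotent, in particular solvable.

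For the image, $\operatorname{im}\rho$ sits inside the group $\mathcal G$ of all maps of $\R^2$ of the form $(x_1,x_2)\mapsto(a_1x_1+c_1,\;a_2x_2+\eta(x_1))$ with $a_1,a_2\in\R\backslash\{0\}$, $c_1\in\R$ and $\eta$ Lipschitz; a direct computation shows this class is closed under composition and inversion. I would then produce a short solvable chain for $\mathcal G$. Projecting to the first coordinate gives a homomorphism $\mathcal G\to\text{Aff}(\R)$ onto the metabelian affine group of the line, whose kernel $N$ consists of the maps fixing the first coordinate, $(x_1,x_2)\mapsto(x_1,a_2x_2+\eta(x_1))$. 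Projecting $N\to\R\backslash\{0\}$ by $f\mapsto a_2$ is again a homomorphism, whose kernel is the abelian group of shears $(x_1,x_2)\mapsto(x_1,x_2+\eta(x_1))$ (these compose by adding the $\eta$'s). Hence $N$ is metabelian and $\mathcal G$, so also $\operatorname{im}\rho$, is solvable.

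Finally, $QC(\mathfrak f^n)$ fits into the short exact sequence $1\to\ker\rho\to QC(\mathfrak f^n)\to\operatorname{im}\rho\to 1$ with solvable (indeed nilpotent) kernel and solvable quotient, and is therefore solvable. The main point requiring care is the composition bookkeeping inside $\mathcal G$: under composition the shear term $\eta$ gets pre-composed with the affine map of the $x_1$--line, so the translation data $(a,b)$ is \emph{not} by itself a homomorphism and the coordinates must be peeled off in the correct order. The only other item to verify carefully is that $\rho$ is a well-defined homomorphism, which follows directly from the triangular structure of the normal form.
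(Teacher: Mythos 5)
Your argument is correct, and it rests on the same essential input as the paper's proof --- the normal form $F=h_{a_1,a_2}\circ L_p\circ F_h$ from \cite{X2} --- but it organizes the solvable series differently. The paper peels off the data in two explicit steps: first the graded part via $\pi_1(h_{a_1,a_2}\circ L_p\circ F_h)=(a_1,a_2)$, then the $e_1$-coefficient of the translation via $\pi_2(L_p\circ F_h)=p_1$, and finally verifies by direct calculation that the remaining group $H_2=\{L_{\sum_{j\ge 2}x_je_j}\circ F_h\}$ is abelian; this gives derived length at most $3$. You instead split off the entire induced action on the abelianization $V_1\cong\R^2$ in one stroke via $\rho$, identify the kernel with translations by the derived subgroup, and reduce solvability of the image to the elementary fact that the ``triangular'' group $\mathcal G$ of maps $(x_1,x_2)\mapsto(a_1x_1+c_1,\,a_2x_2+\eta(x_1))$ is solvable. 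What your route buys is a more conceptual description of the kernel and an image analysis that is pure two-dimensional bookkeeping; the cost is one extra layer in the derived series and the burden of checking that $\rho$ is a homomorphism and that its kernel is what you say it is. On that last point, note that your step ``$\rho(F)=\mathrm{id}$ forces $h\equiv 0$ using the normalization $h(0)=0$'' silently uses that the normalization can be imposed at the level of $\mathfrak f^n$ and not merely on the induced map of $V_1$: this amounts to the identity $F_c=L_{ce_2}$ for a constant function $c$ (which holds because $\sum_j c_j(x_1)e_j$ is exactly the conjugate $x^{-1}*(ce_2)*x$), the same fact the paper uses implicitly when it asserts that elements of $H_1$ can be written with $h(0)=0$. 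Even without this observation your kernel would only grow by the abelian group of such $F_c$'s and would remain abelian, so the conclusion is unaffected.
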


We first finish the proof of  Theorem  \ref{fili} assuming Claim  \ref{claim}. 
It follows from Claim \ref{claim} that  $\Gamma$ is solvable and hence is amenable. 
 Now     Theorem  \ref{conjtosim}   implies that there is some Lipschitz function $h_0:\R\ra \R$ such that if we denote by $H_0: V_1\ra V_1$ the map given by 
$$H_0(x_2 e_2+x_1 e_1)= (x_2+h_0(x_1))e_2+ x_1 e_1,$$
  then all elements of  $H_0 \rho(\Gamma)  H_0^{-1}$  have the form
 \begin{equation}\label{e3.10}
x_2 e_2+x_1 e_1\mapsto    a_2(x_2 +b) e_2+ a_1 (x_1+a) e_1\;\; \text{with}\;\; |a_2|=|a_1|.
\end{equation}
Notice that $H_0$ is the map on $V_1$ induced by the 
biLipschitz map $F_{h_0}: \mathfrak f^n\ra \mathfrak f^n$.    It follows that every element of 
$F_{h_0}\Gamma F^{-1}_{h_0}$  induces a  biLipschitz map of $V_1$ as in (\ref{e3.10}).
 Notice that every  map of the form  (\ref{e3.10})   is also   induced by a biLipschitz map 
 of $\mathfrak f^n$ of the form $h_{\epsilon_1, \epsilon_2}\circ \delta_t\circ L_p$,
 where $\epsilon_1, \epsilon_2\in \{1, -1\}$,  $p\in \mathfrak f^n$ and $t>0$.   
  If    two quasiconformal maps  induce  the same  map    on $V_1$,    then they have the same Pansu differential a.e.
  By Lemma  2.5 in \cite{X2},   these two quasiconformal maps differ by a left translation.  
    Hence  every element of 
$F_{h_0}\Gamma F^{-1}_{h_0}$    has the form 
  $L_q\circ h_{\epsilon_1, \epsilon_2}\circ \delta_t\circ L_p$.    Since each of 
  $L_q$,  $h_{\epsilon_1, \epsilon_2}$,  $\delta_t$, $L_p$ is  a similarity,
   the group $F_{h_0}\Gamma F^{-1}_{h_0}$   consists of  similarities.


Next we prove the claim.  First we define a homomorphism 
 $\pi_1: QC(\mathfrak f^n)\ra  \R^*\times  \R^*$ by
  $$\pi_1(h_{a_1, a_2}\circ L_p \circ F_h)=(a_1, a_2).$$
  Since $\R^*\times  \R^*$  is  abelian, 
  $QC(\mathfrak f^n)$ is solvable if the kernel $H_1=\text{ker}(\pi_1)$  is.  
  Notice $H_1$ consists of all quasiconformal maps of $\mathfrak f^n$ of the form
 $F=L_p\circ F_h$, where $p\in \mathfrak f^n$ and  $h:\R\ra \R$ is Lipschitz  satisfying $h(0)=0$.
 Now define a homomorphism $\pi_2: H_1\ra  \R$ by
 $$\pi_2(L_{\sum_{j=1}^{n+1}x_je_j}\circ F_h)=x_1.$$
 The kernel $H_2=\text{ker} (\pi_2)$  of $\pi_2$ consists  of all maps of the form 
 $L_{\sum_{j=2}^{n+1}x_j e_j}\circ F_h$, where $x_j\in \R$ and $h: \R \ra \R$ is Lipschitz  satisfying $h(0)=0$.    A direct calculation shows  that $H_2$ is abelian.    
Since $\R$ is also abelian,   $H_1$ is solvable.   Hence  $QC(\mathfrak f^n)$ is solvable. 

\qed

Here we make some remarks on the connection  between Carnot groups (in particular, the model Filiform groups)
    and homogeneous manifolds with negative curvature.
  Recall that $\R^n$  (together with $\infty$) can be identified with the ideal boundary of the $n+1$ dimensional 
 real hyperbolic space (in the upper half space model). The Euclidean metric on $\R^n$ is a  parabolic visual metric   associated to $\infty$.  Similarly, given any Carnot group $N$ with Lie algebra $\mathfrak n=V_1\oplus \cdots \oplus V_r$, let $\R$ act on $N$ by the standard Carnot group dilations $\delta_{e^t}$ ($t\in \R$). Let $S=N\rtimes \R$ be the corresponding semi-direct product. Then $S$ is a solvable Lie group.   Fix an inner product on the tangent space at the identity element of $S$ such that $\mathfrak n$ and $\R$ are perpendicular and that  $V_i$ and $V_j$ are perpendicular for $i\not=j$.  Then $S$ is negatively curved \cite{H} and the ideal boundary of $S$ can be identified with $N\cup \{\infty\}$. Furthermore,  the Carnot metric on $N$ is a parabolic visual metric associated with $\infty$.

 \section{Applications  to quasi-isometric rigidity}\label{app:sec}

In this section we show how Theorem \ref{conjtosim} can be used to prove Tukia-type Theorem \ref{ProperTukiaThm}   and Theorem \ref{ProperTukia2} below.  Additionally we show how 
Theorem \ref{ProperTukiaThm}  
 can  be used to simplify  some of the proofs of quasi-isometric rigidity found in \cite{D1} and \cite{P1,P2}. Then we show
    how both Theorem \ref{ProperTukiaThm}   and Theorem \ref{ProperTukia2}  can be used to improve results on envelopes of abelian-by-cyclic groups found in \cite{D3}.
 Finally in Theorems \ref{s5.3.2} and \ref{lcdia}   we prove results on quasi-isometric rigidity of Lie groups and locally compact groups quasi-isometric to certain solvable Lie groups.

\subsection{Negatively curved homogeneous spaces.}\label{homogsec}


In this subsection we prove Theorem \ref{ProperTukiaThm}.

Let $A$ be an   $n\times n$ matrix with real  entries.   Suppose 
  $A$ is 
diagonalizable  over the complex numbers and   its eigenvalues have positve real parts.
 We list   the
 real parts of eigenvalues  
  in increasing order  $\alpha_1<\alpha_2< \cdots < \alpha_r$. 
  Let $\R$ act on 
$\R^n$  by the one parameter group $e^{tA}$ ($t\in \R$)  and  
$G_A= \R^n \rtimes_A \R $   the associated semi-direct product. 
  Then  $G_A $ is a   solvable Lie group.  
    Equip $G_A$  with the left invariant Riemannian metric that is determined by the standard inner product at the identity element $(0,0)\in \R^n\times \R=G_A$.  
 Then by \cite{H} we have that $G_A$ is negatively curved.   
  For $x_0\in \R^n$, the path $c_{x_0}: \R\ra G_A$, $c_{x_0}(t)=(x_0, t)$, is a geodesic in $G_A$.   We call   $c_{x_0}$ 
  a    vertical geodesic.
 All  vertical geodesics are asymptotic as $t \to \infty$, and so they determine a point $\infty$  in the ideal boundary. If $t \to -\infty$ all vertical geodesics diverge from one another. We call such geodesics downward oriented. 
  The ideal boundary  $\partial G_A$  of $G_A$ is naturally identified with 
 $\R^n\cup \{\infty\}$, where points   in $\R^n$ correspond to downward oriented vertical geodesics.

  Let 
$\bar{A}$   be the matrix  obtained from  the 
 Jordan form of $A$ by  replacing the  eigenvalues with their real parts. 
By  Proposition 4.1   of    \cite{FM}    and  Corollary 3.2 of  \cite{X1},  
the ideal boundaries of $G_A$ and $G_{\bar{A}}$ are biLipschitz. 
   It follows that 
we can assume that $A$ is already diagonal with eigenvalues exactly $\alpha_i$ listed in increasing order.  Let $V_j$  ($1\le j\le r$)  
 be  the eigenspace associated to $\alpha_i$.  Then  $\R^n=V_1\times \cdots \times V_r$.
  We write a point $x\in \R^n$ as $x=(x_1,  \cdots,   x_r)$ with $x_j\in V_j$.  
The  \emph{parabolic  visual  metric}   $d_A$  on   $\R^n=\partial G_A\backslash \{\infty\}$
    associated with  $\infty$  is given by 
$$ d_A((x_1, \ldots, x_r), (x'_1, \ldots, x'_r))= \max_i \{|x_i -x_i'|^{\alpha_1/\alpha_i}\}.$$
The notation used in \cite{D1} defined $G_A$ as $G_M$ where $M=e^A$ but we switch to writing $G_A$ instead of $G_M$ to be  in line with the notation used in \cite{X1}. 

By Proposition 4 in \cite{D1},  every  biLipschitz (and by \cite{X1} any quasi-symmetric) map of    
 $ (\R^n, d_A)$ has the form
$$ F(x_1, \ldots , x_r)=(f_1(x_1, \ldots, x_r), f_2(x_2, \ldots, x_r), \ldots, f_r(x_r)),  $$
   where $f_i$ is biLipschitz in $x_i$ and $\alpha_i/\alpha_j$-H\"{o}lder in $x_j$ for $j>i$. 

We are now able to prove   Theorem \ref{ProperTukiaThm}.

 {\bf Proof of Theorem \ref{ProperTukiaThm}.}
By Theorem 2 of \cite{D1} we have that  $\Gamma$   
 can be conjugated to act by maps that are the composition of similarities and maps of the form 
$$(x_1,\; x_2,\; \ldots, \;x_r) \mapsto ( x_1+h_1(x_2, \ldots, x_r),\; x_2+ h_2(x_3, \ldots, x_n),\; \ldots, \;x_r + h_r)$$
where $h_i$  is $\alpha_i/\alpha_j$-H\"{o}lder in $x_j$ for $j>i$. In other words,
  after conjugation by a biLipschitz  map,    any $F \in \Gamma$ has the form 
$$F(x_1, \ldots, x_r)=(a^{\alpha_1}A_1 (x_1 + h_1(x_2, \ldots, x_r)), \ldots, 
a^{\alpha_{r-1}}A_{r-1} (x_{r-1} + h_{r-1}(x_r)), 
a^{\alpha_r}A_r (x_r + h_r))$$
where  $a>0$  and  the $A_j$ is an  orthogonal  transformation of $V_j$. These maps are referred to as \emph{almost similarities} in \cite{D1}.

   Notice that     $\Gamma$ induces an action on  $V_j\times \cdots \times V_r$   for any $1\le j \le r$ and that the induced action of  $\Gamma$  on $V_r$ is   by similarities.   We will finish the proof by induction. 
  Assume   the induced action of $\Gamma$ on $V_{j+1}\times \cdots \times V_r$   is by similarities. 
 We will show that there is a biLipschitz map $F_0:   (\R^n, d_A)  \ra   (\R^n, d_A) $  such that the induced action of
  $F_0\Gamma F_0^{-1}$  on $V_j\times \cdots \times V_r$  is by similarities.  This will complete the proof.

Set  $Y=V_{j+1}\times \cdots \times V_r$.  The metric on $Y$ is given by
  $$D((x_{j+1}, \cdots, x_r),  (x'_{j+1}, \cdots, x'_r))=\max_{j+1\le k\le r} |x_k-x'_k|^{\frac{\alpha_1}{\alpha_k}}.$$
   Notice that   the map $h_j:   (Y, D)\ra (V_j, |\cdot|^{\frac{\alpha_1}{\alpha_j}})$ is Lipschitz. 
  Denote  $a_j=h_j(0)$ and let $g_j(y)=h_j(y)-a_j$.    
Then $g_j:  (Y, D)\ra (V_j, |\cdot|^{\frac{\alpha_1}{\alpha_j}})$
 is Lipschitz and $g_j(0)=0$.  
Also by assumption,  the induced action of $\Gamma$ on $Y$ is by 
 similarities.
   Now  
    the induced action of $\Gamma$ on $V_j\times Y$ is by biLipschitz maps of the form
  $$(x_j,  y)\mapsto  (a^{\alpha_j} A_j (x_j+a_j+ g_j(y)),  \sigma(y)),$$
  where $g_j:   (Y, D)\ra (V_j, |\cdot|^{\frac{\alpha_1}{\alpha_j}})$ is Lipschitz, $g_j(0)=0$ and $\sigma$ is a similarity of  $Y$.  
  Since $\Gamma$ is a uniform quasisimilarity group of  $(\R^n, d_A)$, 
    it  is easy to see that the induced action of $\Gamma$ on  $V_j\times Y$  is a
  uniform quasisimilarity action.  Hence    Theorem  \ref{conjtosim}  implies that there is some 
 Lipschitz map $h:    (Y, D)\ra (V_j, |\cdot|^{\frac{\alpha_1}{\alpha_j}})$ such that  after conjugation by the  biLipschitz map 
 $  V_{j}\times Y\ra     V_{j}\times Y$,  $(x_j,  y)\mapsto (x_j+h(y), y)$,
    $\Gamma$ acts on    $V_{j}\times Y$  by   similarities.   
  Let $F_0:  (\R^n, d_A)  \ra   (\R^n, d_A) $ be the map given by:
$$F_0(x_1, \ldots, x_j,   y) = (x_1, \ldots, x_{j} +h(y) , y).$$
 Then $F_0$ is biLipschitz. Furthermore,  
the induced action of
  $F_0\Gamma F_0^{-1}$  on $V_j\times \cdots \times V_r$  is by similarities.
\qed

\subsection{Quasi-isometric rigidity for lattices in certain solvable Lie groups}
In this  subsection we show how Theorem \ref{ProperTukiaThm} can be used to simplify parts of the proof of quasi-isometric rigidity for lattices in certain solvable Lie groups.  The first proof we can simplify is the following theorem for lattices in certain abelian-by-cyclic groups that was announced in \cite{EFW}:   

\begin{theorem}\label{specialcase} 
  Let  $A$ be a $n\times n$ real matrix 
diagonalizable over the complex numbers. Suppose    $\tr{A}=0$
    and that   $A$ has  no  purely imaginary eigenvalues. 
Let $G_A= \R^n \rtimes_A \R$. If $\Gamma$ is a finitely generated group quasi-isometric to $G_A$,  
    then $\Gamma$ is virtually a lattice in $\R^n \rtimes_{B}\R$,
        where $B$ is a matrix that has the same absolute Jordan form as $\alpha A$ for some positive $\alpha \in \R $. 
\end{theorem}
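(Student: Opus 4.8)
The plan is to reduce the theorem to two applications of Theorem \ref{ProperTukiaThm}, one for the expanding directions of $A$ and one for the contracting directions. First I would record that $G_A$ is amenable, being solvable, and that amenability is a quasi-isometry invariant for finitely generated groups, so $\Gamma$ is amenable. Choosing a quasi-isometry $\Gamma\to G_A$ and conjugating the left translations of $\Gamma$ through it yields a cobounded, proper, uniform quasi-action of $\Gamma$ on $G_A$. Since $\tr A=0$ and $A$ has no purely imaginary eigenvalues, its eigenvalues split into those of positive and those of negative real part; writing $\R^n=\R^{n_+}\oplus\R^{n_-}$ for the corresponding $A$-invariant subspaces and $A_+=A|_{\R^{n_+}}$, $A_-=A|_{\R^{n_-}}$, the space $G_A$ is the horocyclic product, glued along the common $\R$ height factor, of the two negatively curved Heintze-type homogeneous spaces $G_{A_+}$ and $G_{-A_-}$, whose parabolic visual boundaries are $(\R^{n_+},d_{A_+})$ and $(\R^{n_-},d_{-A_-})$. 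Note that $A_+$ and $-A_-$ both have eigenvalues with positive real part and are diagonalizable over $\mathbb C$, so Theorem \ref{ProperTukiaThm} is applicable to each factor.

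The geometric heart of the argument, which I would cite from \cite{EFW} rather than reprove, is that every quasi-isometry of $G_A$ is, after possibly composing with a height-reversing isometry, \emph{height-respecting}: it coarsely preserves the height function, and hence the two boundary foliations. A height-respecting quasi-isometry induces boundary maps on $\R^{n_+}$ and $\R^{n_-}$; the uniform additive control on heights yields a uniform multiplicative, i.e.\ quasisimilarity, constant for these maps, so they are uniform quasisimilarities for $d_{A_+}$ and $d_{-A_-}$. Passing to the index at most two subgroup $\Gamma_0\le\Gamma$ whose quasi-action is height-preserving (needed only when an end-swap occurs, and harmless for the ``virtually'' conclusion), and using that quasi-isometries at bounded distance induce the same boundary map, I would check that sending $\gamma$ to its boundary map defines honest homomorphisms $\rho_\pm\colon\Gamma_0\to\mathrm{QS}(\R^{n_\pm})$ into the groups of quasisimilarities, with uniformly bounded constants. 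Coboundedness and properness of the quasi-action translate into $\rho_\pm(\Gamma_0)$ acting cocompactly on the space of distinct pairs of $\R^{n_\pm}$, while amenability of $\Gamma_0$ is inherited from $\Gamma$.

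With these hypotheses verified I would apply Theorem \ref{ProperTukiaThm} twice: there are biLipschitz maps conjugating $\rho_+(\Gamma_0)$ into similarities of $(\R^{n_+},d_{\bar{A}_+})$ and $\rho_-(\Gamma_0)$ into similarities of $(\R^{n_-},d_{\overline{-A_-}})$, where the bars denote replacing eigenvalues by their real parts. A group of similarities of such a parabolic visual boundary fixing $\infty$ is induced by a group of isometries of the corresponding Heintze space, so after conjugation $\Gamma_0$ acts by isometries on $G_{\bar{A}_+}$ and on $G_{\overline{-A_-}}$. Reassembling the two factors along the height and matching the two similarity-to-height scalings forces a single common scalar $\alpha>0$ and produces a proper cocompact isometric action of $\Gamma_0$ on $G_B$, where $B$ is block-diagonal recording the real parts of the eigenvalues of $A$ rescaled by $\alpha$; this is exactly the statement that $B$ has the same absolute Jordan form as $\alpha A$. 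Since $\mathrm{Isom}(G_B)$ is a compact extension of $G_B$ and $G_B$ acts simply transitively on itself, standard arguments show that a group acting properly cocompactly by isometries on $G_B$ is virtually a lattice in $G_B=\R^n\rtimes_B\R$; this gives the conclusion for $\Gamma_0$, hence for $\Gamma$.

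The main obstacle is the geometric input that quasi-isometries of $G_A$ are height-respecting: this is the deep content of the Eskin--Fisher--Whyte program, and without it the reduction to boundary maps is unavailable, so I would treat it as a black box. The genuinely new simplification over \cite{D1,EFW} is replacing the ad hoc boundary-rigidity arguments by the single clean application of Theorem \ref{ProperTukiaThm}. The remaining delicate bookkeeping — verifying cocompactness on distinct pairs, pinning down the scalar $\alpha$ from the height-matching, and accounting for a possible swap of the two ends, which is what forces both the passage to a finite-index subgroup and the ``absolute'' in absolute Jordan form — I expect to be routine, but it must be carried out with care.
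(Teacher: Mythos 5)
Your proposal follows essentially the same route as the paper's own outline: invoke the Eskin--Fisher--Whyte/Peng structure theory as a black box to see that $\Gamma$ acts (up to finite index/kernel) as a uniform quasisimilarity group on the two boundary factors, apply Theorem \ref{ProperTukiaThm} to each, and use the uniformity of the similarity constants to reassemble the action into a cocompact isometric action on the model space. The only place you are lighter than the paper is the final step: passing from a cocompact isometric action on $G_B$ to being virtually a lattice in some $\R^n\rtimes_{B'}\R$ is not quite ``standard'' --- the paper isolates it as Proposition \ref{witte:prop} (due to Witte Morris), whose proof uses Gordon--Wilson and Mostow and which produces a lattice in \emph{some} semidirect product with the prescribed absolute Jordan form rather than in $G_B$ itself.
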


Here $A$ necessarily has both eigenvalues with positive real part and eigenvalues with
 negative  real part.
Theorem \ref{specialcase} is proved in \cite{P1,P2} as part of the following more general theorem on lattices in abelian-by-abelian solvable Lie groups.  

\begin{theorem}[Peng]\label{pengthm} Let $G_\psi= \mathbf{H} \rtimes_\psi \mathbf{A}$ where $\mathbf{H},\mathbf{A}$ are   Euclidean   groups and $\psi:\mathbf{A} \to Aut(\mathbf{H})$ is such that every (nontrivial) element of $\psi(\mathbf{A})$ is diagonalizable and has at least one eigenvalue whose absolute value is not equal to one.
Suppose that $\Gamma$ is a finitely generated group quasi-isometric to a lattice in $G_\psi= \mathbf{H} \rtimes_\psi \mathbf{A}$. Then $\Gamma$ is virtually a lattice in $\mathbf{H}\rtimes_{\psi'}\mathbf{A}$ for some $\psi':\mathbf{A} \to Aut(\mathbf{H})$.
\end{theorem}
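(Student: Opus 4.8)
The plan is to run the standard boundary-map strategy for quasi-isometric rigidity of solvable Lie groups, replacing its most delicate algebraic step by an appeal to Theorem \ref{ProperTukiaThm}. Since lattices in solvable Lie groups are cocompact (Mostow), $\Gamma$ is in fact quasi-isometric to $G_\psi$ itself, so the first step is to promote the quasi-isometry to a cobounded quasi-action of $\Gamma$ on $G_\psi$: by the Milnor--\v{S}varc argument, left multiplication of $\Gamma$ on itself is conjugated by the quasi-isometry to a uniform, cobounded quasi-action of $\Gamma$ on $G_\psi$.

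Next I would unpack the coarse geometry of $G_\psi$. Because every nontrivial element of $\psi(\mathbf{A})$ is diagonalizable with some eigenvalue of modulus $\ne 1$, the root-space decomposition of $\mathbf{H}$ organizes $G_\psi$ as a horocyclic product of negatively curved pieces: a generic direction $a\in\mathbf{A}$ contracts the root spaces on which $\psi(a)$ expands and expands the complementary ones, and the associated upper and lower horospherical foliations have leaf data carrying a parabolic visual metric of the type $(\R^m, d_B)$ studied in Section \ref{app:sec}.

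The hard part, and the one step that is genuinely analytic rather than algebraic, is coarse differentiation: one must show that the quasi-action coarsely preserves this system of foliations and therefore induces honest boundary homeomorphisms on each leaf space $(\R^m, d_B)$. This is precisely the technical core of \cite{EFW} and \cite{P1,P2}, it is not simplified by the present methods, and I would import it unchanged. Its output is that each $\gamma\in\Gamma$ induces a boundary map of the standard upper-triangular form appearing in Section \ref{app:sec}, and that uniformity of the quasi-action makes the induced maps a uniform quasisimilarity group fixing $\infty$.

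This is where the contribution of this paper takes over from \cite{P1,P2}. The induced boundary group is a uniform quasisimilarity group of $(\R^m, d_B)$; it is amenable because amenability is a quasi-isometry invariant and $G_\psi$ is solvable, and coboundedness of the quasi-action makes it act cocompactly on distinct pairs. I would therefore apply Theorem \ref{ProperTukiaThm} (and its extension Theorem \ref{ProperTukia2} when $\Q_m$-factors are present) to conjugate each boundary group into the similarity group of $(\R^m, d_{\bar B})$, in place of the direct verification in \cite{P1,P2} that the boundary maps become affine. Reassembling the conjugated boundary data across all generic directions of $\mathbf{A}$ identifies the $\Gamma$-action with the boundary action of a cocompact lattice in some $\mathbf{H}\rtimes_{\psi'}\mathbf{A}$; standard discreteness and coboundedness bookkeeping then upgrades this to the statement that $\Gamma$ is virtually a lattice in $\mathbf{H}\rtimes_{\psi'}\mathbf{A}$. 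The main obstacle is entirely the coarse differentiation step; once it is granted, Day's fixed point theorem through Theorem \ref{ProperTukiaThm} disposes of the remainder cleanly.
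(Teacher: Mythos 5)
Your proposal follows the paper's own outline for Theorems \ref{specialcase} and \ref{pengthm}: import the coarse differentiation and structure theory of quasi-isometries of $G_\psi$ from Eskin--Fisher--Whyte and Peng unchanged, identify $QI(G_\psi)$ (up to finite index) with a product $\prod Bilip(\R^{n_i},d_{A_i})$ so that $\Gamma$ acts as a uniform quasisimilarity group on each factor, and then substitute Theorem \ref{ProperTukiaThm} for the almost-similarity analysis, using amenability (inherited under quasi-isometry from the solvable model) and coboundedness for the cocompactness-on-pairs hypothesis. The one place you undersell the argument is the final step: after conjugation $\Gamma$ sits, up to finite kernel, as a cocompact lattice in $\Isom(\mathbf{H}\rtimes_{\bar\psi}\mathbf{A})$, not yet in a group of the form $\mathbf{H}\rtimes_{\psi'}\mathbf{A}$, and passing from the former to the latter is not ``standard discreteness and coboundedness bookkeeping''---the paper needs Proposition \ref{witte:prop} (due to Witte Morris), which invokes the Gordon--Wilson description $\Isom(G)=G\rtimes K$ with $K$ compact, Mostow's theorem that $\Gamma\cap\mathbf{H}$ is a lattice in the nilradical, and the construction of a closed subgroup $\mathbf{T}\iso\mathbf{A}$ of $\mathbf{A}\times K^\circ$ containing $\pi(\Gamma)$ as a lattice. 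Likewise, the preceding assembly step (that uniformity couples the similarity constants across the separate $Sim(\R^{n_i},d_{\bar A_i})$ factors, so the conjugated group lands in a single isometry group rather than an unconstrained product of similarity groups) deserves to be made explicit rather than absorbed into ``reassembling the boundary data.''
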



We present the outline of the simplified proof of Theorems \ref{specialcase} and 
\ref{pengthm} now that we have access to Theorem \ref{ProperTukiaThm}.

A key part of the argument is understanding the {quasi-isometry group} of $G_\psi$.
 Let $X$ be a metric space. 
 Two quasi-isometries $f, g: X\ra X$  are  equivalent if   
 $$d(f,g):=\sup\{d(f(x), g(x))| x\in X\}<\infty.$$
  Let   $QI(X)$ be the set of equivalence classes  $[f]$ of self quasi-isometries of $X$.
  The formula $[f]\cdot [g]=[f\circ g]$ defines a group  structure on $QI(X)$.
 The inverse of $[f]$ is the class represented by a quasi-inverse of $f$. 
 We call $QI(X)$ the {\emph{quasi-isometry group} }   of $X$.  A subgroup $U$ of $QI(X)$ is called \emph{uniform} if there are fixed constants  $K\geq 1,C\geq 0$ such that each class in $U$ has at least one representative that is a $(K,C)$  quasi-isometry. 

\subsection*{Outline of proof of Theorems  \ref{specialcase} and \ref{pengthm}.}
\begin{enumerate}
\item The first ingredient is Peng's theorem on the structure of quasi-isometries $f: G_\psi \to G_\psi$ (Theorem 5.3.6 in \cite{P2}). Peng shows that all such quasi-isometries $f$ are a bounded distance from a map of the form 
$(\mathbf{x}, \mathbf{t}) \mapsto (f_{\mathbf{H}}(\mathbf{x}), f_{\mathbf{A}}(\mathbf{t}))$  for $(\mathbf{x}, \mathbf{t}) \in \mathbf{H} \rtimes_\psi \mathbf{A}$  such that $f_{\mathbf{A}} :\mathbf{A} \to \mathbf{A}$ is an affine map. 
\item Using Peng's theorem one can identify the quasi-isometry group (up to finite index) as a product of groups of biLipschitz maps:
$$ QI(G_\psi) \simeq \prod Bilip( \R^{n_i}, d_{A_i}) $$
where the $A_i$ depend on $\psi$. In the special case of Theorem \ref{specialcase} this becomes
$$QI(G_A) \simeq Bilip( \R^{n_\ell}, d_{A_\ell}) \times  Bilip( \R^{n_u}, d_{A_u}), $$
where $A_u$ corresponds to the eigenvalues of $A$  with positive real part 
 and $A_\ell$ corresponds to the eigenvalues with negative real part,  
and $n_\ell, n_u$ are the dimensions of $A_\ell$ and $A_u$ respectively. 
 (See Sections 2.4 and 2.5 in \cite{D1} for the details of the abelian-by-cyclic case and Proposition 5.3.5 (vi) in \cite{P2} for details in the general case.)
\item Any  finitely generated  group $\Gamma$ quasi-isometric to $G_\psi$ is, up to finite kernel, a uniform subgroup of $QI(G_\psi)$ and hence   acts as  a uniform quasi-similarity subgroup of each $Bilip( \R^{n_i}, d_{A_i})$.
\item   By Theorem \ref{ProperTukiaThm} we have that after conjugation
$$f\Gamma f^{-1} \subset \prod Sim(\R^{n_i}, d_{\bar A_i}).$$
Note that we are allowed to use  Theorem \ref{ProperTukiaThm} since $\Gamma$ being quasi-isometric to an amenable group (a lattice in a  solvable Lie group) is itself amenable.  
\item By uniformity, the similarity constants in each $Sim(\R^{n_i}, d_{\bar A_i})$ factor interact in such a way that $f\Gamma f^{-1}$ can be viewed as a cocompact subgroup of the isometry group of $\mathbf{H}\rtimes_{\bar{\psi}}\mathbf{A}$ for some faithful homomorphism $\bar{\psi}:\mathbf{A} \to Aut(\mathbf{H})$ whose image consists of diagonal matrices. (This is discussed in more detail in Section 4 of \cite{D1}).
\item By Proposition  \ref{witte:prop} below (due to Dave Witte Morris)  this implies that $\Gamma$ is virtually a lattice in $\mathbf{H}\rtimes_{{\psi'}}\mathbf{A}$ for some $\psi'$.
 \end{enumerate}

 {\bf Comments.} The previous proofs of Theorems \ref{specialcase} and \ref{pengthm} diverge from this outline at Step 4. Specifically, 
by Theorem 2 in \cite{D1}, after conjugation $\Gamma$ acts by \emph{almost similarities} on each of the $(\R^{n_i},d_{\bar A_i})$ factors (see the proof of Theorem \ref{ProperTukiaThm} for a definition of almost similarity).  Without Theorem \ref{ProperTukiaThm} one is forced to study the structure of almost similarities to first show that  the  group $\Gamma$ must be virtually polycyclic (and hence virtually a lattice is some solvable Lie group). Further analysis is then used to show that it must be a lattice in $\mathbf{H}\rtimes_{\psi'}\mathbf{A}$ for some $\psi':\mathbf{A} \to Aut(\mathbf{H})$. In the special case (i.e. Theorem \ref{specialcase}),   this is done in Section 4 of \cite{D1}. For the general case this is discussed in Corollaries 5.3.9  and 5.3.11 in \cite{P2}. In both cases it requires substantial additional analysis.

 The following proposition is due to Dave Witte Morris. 
 
 \begin{prop}\label{witte:prop}
 Fix a left-invariant metric on the semidirect product $G = \mathbf{H}\rtimes_{\bar{\psi}}\mathbf{A}$, where $\bar{\psi}$ is a faithful  homomorphism from $\mathbf{A}$ to the diagonal matrices in $Aut(\mathbf{H})$.
 Then any lattice~$\Gamma$ in $\Isom(G)$ has a finite-index subgroup that is isomorphic to a lattice in some semidirect product $ \mathbf{H}\rtimes_{{\psi'}}\mathbf{A}$.
\end{prop}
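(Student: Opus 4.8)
The plan is to study the ambient isometry group $I = \Isom(G)$ and to locate inside it a simply connected solvable group of the form $\mathbf H \rtimes_{\psi'}\mathbf A$ that captures a finite-index subgroup of $\Gamma$. Since $G$ is a simply connected solvable Lie group it is contractible, and $I$ acts on it properly, isometrically, transitively, and with compact point stabilizers; in particular $I$ is a Lie group with finitely many components and $I^0$ already acts transitively. The first step is to show that $I$ is amenable. This is the point where I would use that $\Gamma$ is amenable: in every application of this proposition $\Gamma$ is quasi-isometric to a solvable Lie group and hence amenable. Let $N \trianglelefteq I$ be the amenable radical, so that $I/N$ is semisimple with no compact factors. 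By the structure theory of lattices the image $\bar\Gamma$ of $\Gamma$ is a lattice in $I/N$, and being a quotient of the amenable group $\Gamma$ it is amenable; by Borel density $\bar\Gamma$ is Zariski dense, so its (necessarily amenable) algebraic hull is all of $I/N$. A semisimple group with no compact factors is amenable only if it is trivial, so $I/N$ is trivial and $I$ is amenable. Consequently $I^0$ is solvable-by-compact.

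Next I would extract the relevant solvable group. Let $R$ be the solvable radical of $I^0$ and $C$ a maximal compact subgroup, so $R$ is normal, $I^0 = RC$, and $I^0/R$ is compact. Because the point stabilizer is a maximal compact subgroup of $I^0$ (the quotient $G$ being contractible) and $R$ meets every compact subgroup in a finite set, $R$ acts on $G$ with finite stabilizers; its orbits are then open and closed, so $R$ acts simply transitively. Thus $R \cong G$ as manifolds, and $R$ is a simply connected solvable Lie group with $\dim R = \dim G$. The crux is to identify the group structure of $R$. Here I would use that $I$ preserves the canonical foliation of $G$ by $\mathbf H$-cosets (the horospherical foliation attached to $\infty$, which is isometry-invariant precisely because the amenable case excludes the symmetric spaces on which $I$ moves $\infty$). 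Since $R$ acts simply transitively, the leaves of this $I$-invariant foliation are exactly the $\mathbf H$-orbits for $R$, along which $R$ acts by translations (nilpotently) while acting transversally by the semisimple, $\R$-split automorphisms coming from $\bar\psi$. Faithfulness of $\bar\psi$ then forces the nilradical of $R$ to be exactly $\mathbf H$, with quotient $R/\mathbf H \cong \mathbf A$. As $R$ is simply connected solvable with nilradical $\mathbf H$ and abelian quotient $\mathbf A$, the extension $1 \to \mathbf H \to R \to \mathbf A \to 1$ splits, giving $R \cong \mathbf H \rtimes_{\psi'}\mathbf A$ for some $\psi'\colon \mathbf A \to \Aut(\mathbf H)$.

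Finally I would descend to $\Gamma$. After replacing $\Gamma$ by the finite-index subgroup $\Gamma \cap I^0$, the subgroup $R$ is a cocompact normal subgroup of $I^0$, so by the standard structure theory of lattices (Auslander, Mostow) the intersection $\Lambda := \Gamma \cap R$ is a lattice in $R$ and accounts for $\Gamma$ up to finite index. Hence $\Lambda$ is a finite-index subgroup of $\Gamma$ that is isomorphic to a lattice in $R \cong \mathbf H \rtimes_{\psi'}\mathbf A$, which is exactly the conclusion.

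I expect the main obstacle to be the structure theory of $\Isom(G)$ in the second step: showing that the solvable radical $R$ is again a semidirect product $\mathbf H \rtimes_{\psi'}\mathbf A$ with the same nilpotent factor $\mathbf H$ and complement $\mathbf A$. This is the content of the theory of isometry groups of Riemannian solvmanifolds (Gordon--Wilson, Wolf): one must control the possible \emph{modification} of $G$ inside $I^0$ and verify that, because $\bar\psi$ acts by real diagonal (hence $\R$-split) automorphisms, the \emph{nilshadow} is unchanged and $R$ retains its $\mathbf H$-by-$\mathbf A$ shape. By contrast, the amenability reduction in the first step is routine once $\Gamma$ is known to be amenable, but it is genuinely necessary: when $G$ is a product of hyperbolic planes, $\Isom(G)$ contains irreducible, non-amenable lattices that are not virtually solvable, so an amenability hypothesis on $\Gamma$ cannot be dispensed with in general.
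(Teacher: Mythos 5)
Your final step contains the essential gap. You take $R$ to be the solvable radical of $I^0$, note that $I^0/R$ is compact, and conclude that $\Lambda=\Gamma\cap R$ is a lattice in $R$ of finite index in $\Gamma$ ``by Auslander, Mostow''. Neither theorem gives this: a lattice in a group $R\cdot C$ with $C$ compact can project to an infinite (indeed dense) subgroup of the compact quotient $I^0/R$, in which case $\Gamma\cap R$ has infinite index in $\Gamma$ and is not a lattice in $R$ (compare $\Gamma=\{(n,g^n):n\in\Z\}\le \R\times SU(2)$ with $g$ of infinite order: this is a cocompact lattice, yet $\Gamma\cap\R=\{0\}$). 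Mostow's lemma, which the paper does invoke, only guarantees that a lattice meets the \emph{nilradical} in a lattice. This ``winding around the compact directions'' is precisely why the conclusion allows a new homomorphism $\psi'$ rather than $\bar{\psi}$ itself, and it is exactly what the paper's proof is organized around: after establishing $\mathbf{H}=\nil I^\circ$ and $I^\circ=\mathbf{H}\rtimes(\mathbf{A}\times K^\circ)$, it does \emph{not} intersect $\Gamma$ with a preferred copy of $\mathbf{H}\rtimes\mathbf{A}$; it projects $\Gamma$ to $\mathbf{A}\times K^\circ$, passes to a finite-index subgroup so that $\pi(\Gamma)$ is a lattice in a closed subgroup $\mathbf{T}\cong\mathbf{A}$ of $\mathbf{A}\times K^\circ$, and then realizes $\Gamma$ as a lattice in $\mathbf{H}\rtimes\mathbf{T}\cong\mathbf{H}\rtimes_{\psi'}\mathbf{A}$. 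Your intersection argument cannot produce this $\mathbf{T}$, and without it the statement you would prove is false.

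The structural first half also rests on a different and shakier footing. The paper does not use amenability of $\Gamma$ (which is not a hypothesis of the proposition); it quotes Gordon--Wilson to get $I=G\rtimes K$ with $K\le\Aut G$ compact, and then identifies $\mathbf{H}=\nil I^\circ$ by a short argument using faithfulness of $\bar{\psi}$ together with the fact that a compact group of automorphisms acts reductively and faithfully. Your replacement --- amenable radical, Borel density, then an analysis of the solvable radical $R$ via an ``$I$-invariant horospherical foliation attached to $\infty$'' --- is not justified as written: when $\dim\mathbf{A}>1$ the space $G$ is not Gromov hyperbolic and there is no such boundary point; the claim that $R$ meets every compact subgroup in a finite set (needed for simple transitivity) is unsupported, since a solvable radical may contain tori; and the identification of the nilradical of $R$ with the original $\mathbf{H}$ is asserted rather than proved, whereas this is exactly the point the paper settles with the reductivity/faithfulness argument. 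Your closing observation that the statement cannot hold for arbitrary faithful diagonal $\bar{\psi}$ (e.g.\ the Iwasawa $NA$-group of a product of hyperbolic planes) is a fair caveat about the generality of the statement as written, but it does not repair the gaps above.
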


\begin{proof}
For convenience, let $I = \Isom(G)$. From \cite[Cor. 1.12 and Thms. 4.2 and 4.3]{GW}, we have $I = G \rtimes K$, where $K$ is a compact subgroup of $\Aut G$. Since $K$ is compact, it has only finitely many components, so, by passing to a finite-index subgroup, we may assume $\Gamma$ is contained in the identity component $I^\circ$ of~$I$. 

Since $\bar{\psi}$ is faithful, it is easy to see that $\mathbf{H} = \nil G$, so $(\Aut G)^\circ$ centralizes $G/\mathbf{H}$ (and we have $\mathbf{H} \subseteq \nil I^\circ$). Combining this with the observation that $K$ (being compact) acts reductively, and, being a group of automorphisms, acts faithfully, we conclude that $K$ acts faithfully on $\mathbf{H}$. This implies that $\mathbf{H} = \nil I^\circ$. Hence, a theorem of Mostow \cite[Lem. 3.9]{M} tells us that $\Gamma \cap \mathbf{H}$ is a lattice in $\mathbf{H}$. 

We may assume, after replacing $\mathbf{A}$ by a conjugate, that $K^\circ$ centralizes $\mathbf{A}$ (cf.\ \cite[Prop. 11.23(ii), p. 158]{B}). Then $I^\circ = \mathbf{H} \rtimes (\mathbf{A} \times K^\circ)$, so there is a natural projection $\pi \colon I^\circ \to \mathbf{A} \times K^\circ$. Since the kernel of $\pi$ is $\mathbf{H}$, the conclusion of the preceding paragraph implies that $\pi(\Gamma)$ is a lattice in $\mathbf{A} \times K^\circ$.

After replacing $\Gamma$ by an appropriate finite-index subgroup, it is not difficult 
to see that there is a closed subgroup $\mathbf{T}$ of $\mathbf{A} \times K^\circ$, such that $\mathbf{T} \iso  \mathbf{A}$  and $\pi(\Gamma)$ is a lattice in~$\mathbf{T}$. 
Then $\Gamma$ is a lattice in $\mathbf{H} \rtimes \mathbf{T} \simeq \mathbf{H} \rtimes_{\psi'} \mathbf{A}$. This is the desired conclusion.
\end{proof}

\subsection{Boundaries of amenable hyperbolic locally compact groups}

In this  subsection we prove a Tukia-type
     theorem  (Theorem \ref{ProperTukia2})
for boundaries of millefeuille spaces. 

  We  
consider   locally compact    uniform quasisimilarity groups of 
the metric space $\R^n \times \mathbb{Q}_m$ with the metric $d_{A,m}=\max\{d_A , d_{\mathbb{Q}_m}\}$ where $d_A$ is the metric on $\R^n$ defined in Section \ref{homogsec}  and $d_{\mathbb{Q}_m}$ is the usual metric on the $m$-adics $\mathbb{Q}_m$:
$$d_{\mathbb{Q}_m}(\sum a_i m^i, \sum b_i m^i)= m^{-(k+1)},  $$
where $k$ is the smallest index for which $a_i \neq b_i.$ As promised in the introduction we will explain how $(\R^n\times \Q_m, d_{A,m})$ is the boundary of a  \emph{millefeuille space} $X_{\varphi,m}$. A millefeuille space is a fibered product of a negatively curved homogeneous space $G_\varphi=N \rtimes_\varphi \R$ and an $m+1$ valent tree $T_{m+1}$ with respect to height functions on both factors. On $G_\varphi$ the height function $h_\varphi$ is given by projecting to the $\R$ coordinate and on the tree $h_m$ is given by fixing a base point in the ideal boundary and orienting all the edges   towards this base point. 
    Then $X_{\varphi, m}= \{ (g,t) \in G_{\varphi} \times T_{m+1}  \mid h_\varphi(g)=h_m(t)\}$. Alternatively one can construct $X_{m,\varphi}$ inductively by identifying $m$ copies of $G_\varphi$ above integral heights. Then $X_{\varphi, m}$ is a $CAT(-1)$ space with boundary $(N \times \Q_{m})\cup \{\infty\}$ and it is easy to see that $d_{\varphi,m}$ is the resulting parabolic    visual  metric. For more details see \cite{D2} and \cite{C}.

%


The following  theorem strengthens part of Theorem 1.6 in \cite{D2}.

\begin{theorem}\label{ProperTukia2} 
 Let $A$ be a  real  $n\times n$ matrix  diagonalizable over complex numbers and   whose eigenvalues have positive real   part. 
  Let  $\Gamma$ be a   separable  locally compact  uniform quasisimilarity group of  $(\R^n \times \mathbb{Q}_m, d_{A,m})$.  Suppose  
$\Gamma$ is  amenable  
  and  acts cocompactly   on the space of distinct pairs. Then, 
there exist some $\lambda>0$ and   some integer $s\ge 1$  
and  a biLipschitz map
   $F_0:    (\R^n \times \mathbb{Q}_m, d_{A,m})     \ra  (\R^n \times \mathbb{Q}_s, d_{\lambda \bar{A},s})$,
such that $m,s$ are powers of a common  integer,   and  
  $\Gamma':=F_0\Gamma F_0^{-1}$
 acts on $\R^n \times \mathbb{Q}_s$ by similarities.
   Here $\bar{A}$    denotes  the matrix  obtained from  the 
 Jordan form of $A$ by  replacing the  eigenvalues with their real parts. 
\end{theorem}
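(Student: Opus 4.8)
\textbf{Proof plan for Theorem \ref{ProperTukia2}.}

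The plan is to reduce the statement to the setting already handled by Theorem \ref{conjtosim} by first taming the $\Q_m$-factor and then running the same $\R^n$-direction argument leaf by leaf. First I would invoke the structure theory for quasisimilarities of $(\R^n \times \Q_m, d_{A,m})$: just as in Section \ref{homogsec} and Proposition 4 of \cite{D1}, every biLipschitz (hence every quasisimilarity) map of this product boundary should split as a skew-triangular map, with the $\Q_m$-coordinate transforming by a map that depends only on $\Q_m$, and each $V_j$-coordinate allowed to depend on the later $\R^n$-coordinates and on $\Q_m$. The point at infinity being fixed forces an overall dilation factor $a>0$ common to all blocks, together with orthogonal rotations $A_j$ on each eigenspace $V_j$ and an isometry of $\Q_m$. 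The first genuine step is therefore to understand the induced action of $\Gamma$ on the $\Q_m$-factor: this is a locally compact amenable uniform quasisimilarity group acting cocompactly on pairs of $\Q_m$, so the induced boundary maps are (up to the biLipschitz adjustments allowed) similarities of $\Q_m$. Since similarities of $\Q_m$ correspond to the action on a regular tree $T_{m+1}$, and the similarity constants must be integer powers of $m$, I expect that after replacing $m$ by a common power $s$ (equivalently, subdividing/regrouping the tree) the induced action becomes an honest similarity action on $\Q_s$; this is where the integers $m,s$ being powers of a common integer enters.

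Once the $\Q_s$-factor is under control, the strategy is an induction on the $\R^n$-coordinates entirely parallel to the proof of Theorem \ref{ProperTukiaThm}. I would set $Y = V_{j+1}\times \cdots \times V_r \times \Q_s$, equipped with the max-metric, and assume inductively that $\Gamma$ already acts on $Y$ by similarities. The induced action on $V_j \times Y$ then has exactly the form required by Theorem \ref{conjtosim}:
$$ (x_j, w) \mapsto (a^{\alpha_j} A_j(x_j + a_j + g_j(w)),\ \sigma(w)), $$
where $g_j : (Y,D) \to (V_j, |\cdot|^{\alpha_1/\alpha_j})$ is Lipschitz with $g_j(0)=0$, and $\sigma$ is a similarity of $Y$. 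The verification that this is a uniform quasisimilarity action, and that $g_j$ is uniformly Lipschitz across $\Gamma$, is the same computation as in Lemma \ref{hislip}, now carried out over the enlarged base $Y$; crucially $Y$ is only required to be a metric space in Theorem \ref{conjtosim}, so having a $\Q_s$-factor inside $Y$ costs nothing. Amenability and local compactness of $\Gamma$ are inherited from the hypotheses, and separability is what guarantees that the continuity and measurability needed to apply Day's theorem (separate continuity of the affine action on the compact convex hull) go through. Applying Theorem \ref{conjtosim} produces a Lipschitz $h : Y \to V_j$ so that conjugation by $(x_j,w)\mapsto (x_j + h(w), w)$ upgrades the $V_j$-action to similarities; assembling these conjugations for $j = r-1, \ldots, 1$ and composing with the biLipschitz change of metric $d_A \to d_{\bar A}$ (via \cite{FM,X1}) and the $\Q_m \to \Q_s$ adjustment yields the desired $F_0$.

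I expect the main obstacle to be the $\Q_m$-factor and the arithmetic it imposes. Unlike the purely real case of Theorem \ref{ProperTukiaThm}, here the similarity constants on the $\Q_m$ side are constrained to a discrete multiplicative group (powers of $m$), while the constants on the $\R^n$ side vary over $a^{\alpha_i}$ with $a>0$ continuous; the two sets of constants are linked by the single dilation $a$ coming from fixing $\infty$, and reconciling a continuous dilation with the discreteness of the tree action is precisely what forces the passage from $m$ to a common power $s$ and the appearance of the scalar $\lambda$. Making this compatibility precise — showing that the induced tree action is conjugate to a similarity action on some $\Q_s$ with $m,s$ powers of a common base, and tracking how $\lambda$ must be chosen so that $d_{A,m}$ and $d_{\lambda\bar A,s}$ are biLipschitz — is the delicate part. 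The cocompactness-on-pairs hypothesis is what supplies enough similarities in the tree direction to pin down this structure, and I would lean on the boundary/tree correspondence from \cite{D2,C} to carry it out. By contrast, the Day-theorem input over each leaf is essentially identical to the real case and should require no new ideas.
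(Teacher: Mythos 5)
Your plan matches the paper's proof: the paper likewise first invokes Theorem 1.6 of \cite{D2} to obtain the biLipschitz map $F_0$ (with the passage from $m$ to $s$ and the scalar $\lambda$ already built in) and the skew-triangular ``almost similarity'' normal form on $\R^n\times\Q_s$, and then runs exactly the induction you describe, applying Theorem \ref{conjtosim} to $V_j\times Y$ with $Y=V_{j+1}\times\cdots\times V_r\times\Q_s$. The only difference is that the delicate $\Q_m\to\Q_s$ step you sketch re-deriving is simply quoted from \cite{D2} in the paper, so your proposal is essentially the same argument.
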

\begin{proof}
As with the proof of Theorem \ref{ProperTukiaThm} we start with existing partial results. By Theorem 1.6 in \cite{D2} 
 there exist some $\lambda>0$ and   some integer $s\ge 1$, 
and  a biLipschitz map
   $$F_0:    (\R^n \times \mathbb{Q}_m, \;d_{A,m})     \ra  (\R^n \times \mathbb{Q}_s,\; d_{\lambda \bar A,s})$$
  such that $m,s$ are powers of a common integer,    and  
  $\Gamma':=F_0\Gamma F_0^{-1}$
 acts on $\R^n \times \mathbb{Q}_s$ by maps that are similarities composed with maps of the form 
$$ (x_1, \;\ldots,\; x_r, \;y)\mapsto ( x_1+ h_1(x_2 \ldots, x_r,y), \; \ldots, \; x_r+h_r(y),  \; \sigma(y)).$$
where  $\sigma$ is an isometry of $\mathbb{Q}_s$, 
  and   
 $$h_j:   (V_{j+1}\times \cdots \times V_r\times Q_m, D_{j+1})  \ra  (V_j, |\cdot|^{\frac{\alpha_1}{\alpha_j}})$$
   is   Lipschitz,
  where the metric   $D_{j+1}$  is given by
 $$D_{j+1}((x_{j+1}, \cdots, x_r,  u),   (x'_{j+1}, \cdots, x'_r,  u'))=\max\{d_{\mathbb{Q}_s}(u, u'),  |x_k-x'_k|^{\frac{\alpha_1}{\alpha_k}}, j+1\le k\le r\}.$$
In other words any $F \in  \Gamma'$  
 has the form 
$$F(x_1, \ldots, x_r,y)=(a_F^{\alpha_1}A_{1,F} (x_1 + h^F_1(x_2, \ldots, x_r,y)), \ldots, 
, a_F^{\alpha_r}A_{r,F} (x_r + h^F_r(y)),   
\sigma_F(y))$$
where   $a_F>0$,  the $A_{i,F}$ are orthogonal matrices of the appropriate size, and  $\sigma_F$ 
 is a 
 similarity. 

We proceed by induction as in the proof of Theorem \ref{ProperTukiaThm}. 
  The action of $\Gamma'$ on $Q_s$ is already by similarities.     Now assume 
the induced action of $\Gamma'$ on $Y:=(V_{j+1}\times \cdots \times V_r\times Q_s, D_{j+1})$ is by similarities.  
 We shall find a biLipschitz map $H_0$  of   $(\R^n\times Q_s,  d_{\lambda \bar A,s})$   such that   the induced action of 
  $H_0\Gamma' H_0^{-1}$   on $(V_j, |\cdot|^{\frac{\alpha_1}{\alpha_j}})  \times Y$ is by similarities.  This will complete the proof. 
 For this, we note that the induced action of $\Gamma'$ on 
  $(V_j, |\cdot|^{\frac{\alpha_1}{\alpha_j}})  \times Y$
 is a uniform quasisimilarity action  and has the form required by 
Theorem  \ref{conjtosim}.  Since $\Gamma'$ is amenable,  Theorem  \ref{conjtosim}  implies  there is some Lipschitz map
   $h:  Y\ra (V_j, |\cdot|^{\frac{\alpha_1}{\alpha_j}}) $ such that    
  if we define $H_0:  (\R^n \times \mathbb{Q}_s, d_{\lambda \bar A,s})\ra  (\R^n \times \mathbb{Q}_s, d_{\lambda \bar A,s})$
 by 
 $$H_0(x_1,  \cdots, x_j, y)=(x_1, \cdots, x_j+h(y), y)\;\;\text{for}\;\; x_i\in V_i, \;y\in Y,$$
 then  the induced action of  $H_0\Gamma'H_0^{-1}$  on  $(V_j, |\cdot|^{\frac{\alpha_1}{\alpha_j}})  \times Y$ is by similarities.

\end{proof}

{\bf Remark.} In the above theorem we are sometimes forced to take $s$ different from $m$:
    as observed in \cite{MSW} there are quasisimilarity groups of $\Q_m$ that cannot be conjugated into the group of similarities of $\Q_m$ but  can always be conjugated  into the similarity group of some $\Q_s$,
    where $s,m$ are powers of a common  integer. Furthermore, by Corollary 5 in \cite{D2} we know that we must have $s=m^\lambda$.




\subsection{Envelopes}
In this  subsection we show how Theorems \ref{ProperTukiaThm} and \ref{ProperTukia2} can be used  to answer the following problem for certain  finitely generated solvable groups. 

\begin{problem}\label{lccmpt}
Given a finitely generated group $\Gamma$, classify up to extensions of and by compact groups, all locally compact groups $H$ such that $\Gamma \subset H$ as a  lattice. 
\end{problem}

Such an $H$ was called an \emph{envelope} of $\Gamma$  by Furstenberg in \cite{Furs} where he proposed the study of this problem. 
In \cite{F}, Furman classifies all locally compact envelopes of lattices in semisimple Lie groups and outlines a technique using the  {quasi-isometry group} to solve Problem \ref{lccmpt} for cocompact lattice embeddings.  In \cite{D3} the first author adapts this outline for cocompact lattices to solve the envelopes problem for various classes of solvable groups. Below we  show how Theorems \ref{ProperTukiaThm} and \ref{ProperTukia2} simplify the proofs from \cite{D3} in certain cases and extend   the results  to other groups as well.

The outline for approaching this problem is as follows. If $\Gamma \subset H$ is a {cocompact} lattice embedding then one can construct a map $\Psi: H \to QI(\Gamma)$ where the image of $H$ sits as a uniform subgroup
 inside $QI(\Gamma)$ and where $\Psi(\Gamma)$ sits inside $\Psi(H)$
  as a subgroup of isometries. 
At this point we should remark that this construction is only useful when the kernel of the map $H \to QI(\Gamma)$ is compact. This is not true in general but it does hold for so called \emph{QI-tame} spaces. For more details on the construction of $\Psi$ see \cite{F} Section 3 and for the definition of QI-tame see \cite{D3} Section 4.
If $QI(\Gamma)$ can be identified with a  group of quasiconformal or biLipschitz maps of some sort of boundary then Tukia-type theorems can be used
  to   conjugate the image of $H$ into a subgroup that can be identified with the isometry group of some proper geodesic metric space. The conclusion is then that $H$ must be a cocompact subgroup of  the isometry group of this metric space up to possibly some compact kernel. In \cite{F} Furman applies this outline to uniform lattices in the isometry groups of  real and complex hyperbolic spaces.

This outline works in more generality. For any locally compact compactly generated group $H$ (with or without lattices) quasi-isometric to a metric space $X$, one can similarly define $\Psi:H \to QI(X)$ so that $\Psi(H)$ is a uniform subgroup of $QI(X)$. If $H$ (and hence $X$) are QI-tame then there is a natural topology on uniform subgroups of $QI(X)$ and the map $\Psi:H \to \Psi(H)\subset QI(X)$  is a continuous map with respect to this topology. Again, if it is possible to identify $QI(X)$ with biLipschitz or quasiconformal maps of appropriate boundaries then Tukia-type theorems can be used to show that $\Psi(H)$ is a subgroup of a similarity group (which can be further identified with the isometry group of $X$ or potentially a related metric space).  This fact is used in the proof of Theorems \ref{s5.3.2} and \ref{lcdia} in the next section.


\subsubsection{Envelopes of abelian-by-cyclic groups} \label{s4.1.1}
In our case the above 
   outline in conjunction with Theorems \ref{ProperTukiaThm}  and \ref{ProperTukia2}  is used to answer Problem \ref{lccmpt} for various finitely presented abelian-by-cyclic groups. Finitely presented abelian-by-cyclic groups can be 
specified by picking $M=(m_{ij}) \in GL_n(\Z)$ and setting
$$\Gamma_M=\left< a, b_1, \ldots, b_n \mid ab_ia^{-1}= b_1^{m_{1i}}\cdots b_n^{m_{ni}},\ b_ib_j=b_jb_i\right>.$$ 
When $M \in SL_2(\Z)$ has eigenvalues off of the unit circle then $\Gamma_M$ is a lattice in $SOL$ and more generally  when $M \in SL_n(\Z)$ these groups are (virtually) lattices in {abelian-by-cyclic} solvable Lie groups. 
Alternatively, when $M$ is a one-by-one matrix with entry $m$ then $\Gamma_M$ is the solvable Baumslag-Solitar group $BS(1,m)$ but in general their geometry is much more rich.
In particular they have model spaces that are fibered products of a solvable Lie group and a tree $T_{d+1}$ of valence $d+1$ where 
$d=|\det{M}|$ with respect to appropriate {height} functions. (This time the height function on the tree is the negative of the height function we used to construct the millefeuille space in the previous section which results in a space that is not $CAT(-1)$.) 
The solvable Lie group in the fibered product can be chosen as follows:
up to possibly squaring $M$ (which is equivalent to replacing $\Gamma_M$ with an index $2$ subgroup) we can assume that $M=e^A$ lies on a one parameter subgroup,    and 
then the solvable Lie group can be chosen to be
 $G_{{A}}=\R^n \rtimes_{{A}} \R$ where $\R$ acts on $\R^n$ by the one parameter subgroup $e^{t{A}}$. (Here $A$ may have both  eigenvalues with  positive real parts and 
eigenvalues with
    negative real parts so  $G_{{A}}$ is not necessarily negatively curved). 
  Let  $X_{{M}}$  be the resulting  fibered product  of  $G_A$ and $ T_{d+1}$, 
   where $d=|\det{M}|$.  Then $X_{{M}}$ is a 
   model space for  $\Gamma_M$.  
   Similarly    we let $X_{{\bar M}}$  be the   fibered product  of  $G_{\bar A}$ and $ T_{d+1}$, 
   where  $\bar M=e^{\bar A}$  and $d=|\det{M}|$. 
%
Note that when $\det{M}=1$ then the tree in the fibered product is simply a line and so $X_{{M}} \simeq G_{{A}}$.
For more details on this see Section 6 in \cite{D3}.
 In \cite{D3} we prove the following theorem.

\begin{theorem}\label{abcthm} Suppose  $M$ has all eigenvalues off of the unit circle and either $\det{M}=1$ or all of the eigenvalues have norm greater than one.  
 Then any envelope   $H$  of $\Gamma_M$  is, 
 up to compact groups,   a cocompact  closed subgroup $H'$ in  $Isom(X_{\bar{M}^k})$ for some $k\in \Q$.
\end{theorem}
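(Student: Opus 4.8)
The plan is to follow the envelope outline sketched in Section \ref{app:sec}, using Theorem \ref{ProperTukia2} as the crucial replacement for the finer analysis of almost similarities in \cite{D3}. Let $\Gamma_M \subset H$ be a cocompact lattice embedding. First I would invoke the QI-tameness of the model space $X_M$ (established in \cite{D3}, Section 4 for these abelian-by-cyclic groups) to construct the continuous map $\Psi: H \to QI(X_M)$ with compact kernel $\ker \Psi$, so that $\Psi(H)$ is a uniform subgroup of $QI(X_M)$ and $\Psi(\Gamma_M)$ sits inside as a subgroup acting by isometries. Since $\Gamma_M$ is amenable (being polycyclic, as $M \in GL_n(\Z)$ and the group is abelian-by-cyclic), and $H$ is quasi-isometric to $\Gamma_M$, the group $H$ is itself amenable, and hence so is $\Psi(H)$.

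Next I would identify $QI(X_M)$ with a group of biLipschitz maps of the parabolic visual boundary. Here one splits into the two cases of the hypothesis. When $\det M = 1$ the tree degenerates to a line, $X_M \simeq G_A$, and the boundary is $(\R^n, d_A)$, so $QI(X_M)$ is identified with $\mathrm{Bilip}(\R^n, d_A)$ exactly as in the abelian-by-cyclic discussion of Step (2) in the outline for Theorems \ref{specialcase} and \ref{pengthm}; Theorem \ref{ProperTukiaThm} then applies. When all eigenvalues have norm greater than one, $G_A$ is negatively curved and $X_M$ is (quasi-isometric to) a millefeuille-type space whose boundary is $(\R^n \times \Q_d, d_{A,d})$ with $d = |\det M|$, so $QI(X_M)$ is identified with the uniform quasisimilarity maps of this boundary and Theorem \ref{ProperTukia2} applies. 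In either case, $\Psi(H)$ becomes a locally compact amenable uniform quasisimilarity group acting cocompactly on distinct pairs (cocompactness on pairs coming from the fact that $\Gamma_M$, and hence $H$, acts cocompactly on the model space, which translates to a cocompact action on the boundary minus the fixed point $\infty$).

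The heart of the argument is then to apply the appropriate Tukia-type theorem to conjugate $\Psi(H)$ into a similarity group. Applying Theorem \ref{ProperTukia2} (or Theorem \ref{ProperTukiaThm} in the $\det M = 1$ case) produces a biLipschitz map $F_0$ conjugating the boundary action so that $F_0 \Psi(H) F_0^{-1}$ acts by similarities on $\R^n \times \Q_s$ for some $s$ with $m, s$ powers of a common integer, and by Corollary 5 of \cite{D2} we have $s = d^\lambda$ for the corresponding exponent; after rescaling the metric by $\lambda$ this identifies the target boundary with that of $X_{\bar M^k}$ for a suitable $k \in \Q$ (the rescaling of $A$ to $\lambda \bar A$ corresponding precisely to replacing $\bar M = e^{\bar A}$ by a rational power $\bar M^k = e^{k \bar A}$, and the passage from $\Q_d$ to $\Q_s$ corresponding to the change in the valence of the tree). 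A similarity group of the boundary $(\R^n \times \Q_s, d_{\lambda \bar A, s})$ is exactly the induced boundary action of a cocompact subgroup of $\Isom(X_{\bar M^k})$, so $F_0 \Psi(H) F_0^{-1}$ is identified with such a cocompact closed subgroup $H'$. Since $\ker \Psi$ is compact, $H$ is, up to compact groups, this $H'$, which is the desired conclusion.

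The main obstacle I expect is bookkeeping the two rescalings and verifying the claimed conclusion genuinely lands in $\Isom(X_{\bar M^k})$ rather than some unrelated space. Specifically, I must check that a group of similarities of the boundary $(\R^n \times \Q_s, d_{\lambda \bar A, s})$ which acts cocompactly on distinct pairs really does arise as the full induced isometry action on a space of the form $X_{\bar M^k}$, and that the matching of the tree valence $s = d^\lambda$ with the dilation factor $\lambda \bar A$ is consistent with a single rational parameter $k$; this is the step where the interaction of the similarity constants across the $\R^n$ and $\Q_s$ factors (analogous to Step (5) of the earlier outline, and invoking a proper-action/cocompactness argument in the spirit of Proposition \ref{witte:prop}) must be used to recover the geometric statement from the boundary dynamics.
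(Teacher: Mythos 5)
There is a genuine gap here, and it helps to note first that this paper does not actually prove Theorem \ref{abcthm}: the theorem is quoted from \cite{D3} (``In \cite{D3} we prove the following theorem''), where it is established using only the partial Tukia-type results of Theorem 2 in \cite{D1} and Theorem 1.6 in \cite{D2} --- which conjugate the boundary action into \emph{almost similarities} --- followed by a substantial further analysis of almost-similarity groups. What you have written is, in substance, the paper's outline of the proof of the \emph{strengthened} statement Theorem \ref{abcthm2}, which carries the additional hypothesis that $M$ is diagonalizable over the complex numbers.

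That hypothesis is exactly where your argument breaks. Your central step applies Theorem \ref{ProperTukiaThm} (when $\det M=1$) or Theorem \ref{ProperTukia2} (when all eigenvalues have norm greater than one) to $\Psi(H)$, but both of those theorems require the matrix $A$ to be diagonalizable over the complex numbers, whereas Theorem \ref{abcthm} only asks that the eigenvalues of $M=e^{A}$ avoid the unit circle. For non-diagonalizable $M$ the conjugation into genuine similarities is simply not supplied by anything in this paper, and one is forced back onto the route of \cite{D3}: conjugate into almost similarities and then do the extra structural analysis. A secondary inaccuracy: in the case $\det M=1$ with eigenvalues on both sides of the unit circle, $G_A$ is not negatively curved and $QI(X_M)$ is not a single group $Bilip(\R^{n},d_{A})$ but a product $Bilip(\R^{n_\ell},d_{A_\ell})\times Bilip(\R^{n_u},d_{A_u})$ attached to the two parabolic boundaries; the Tukia-type theorem must be applied to each factor separately, and a uniformity argument in the spirit of Step (5) of the outline for Theorem \ref{specialcase} is then needed to recouple the similarity constants of the two factors before the group can be recognized inside $\Isom(X_{\bar M^{k}})$. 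You flag the analogous bookkeeping issue at the end, but in the non-diagonalizable setting it is not mere bookkeeping --- it is the missing core of the proof.
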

In this theorem  by ``up to compact groups'' we mean that there is a short exact sequence:
$$1 \to H' \to H/K\to K' \to 1$$ with $K,K'$ compact.   Furthermore, we may choose $H'$ such that $\Gamma_M$ embeds in $H'$.
If instead we assume that $M$ (and hence $A$) is diagonalizable then we can use Theorems \ref{ProperTukiaThm} and \ref{ProperTukia2} to simplify the proof and strengthen the statement of Theorem \ref{abcthm} and to extend it to more cases. 

\begin{theorem}\label{abcthm2} Suppose $M$ is diagonalizable over the complex numbers  and with all eigenvalues off of the unit circle.
 Let  $H$ be  an envelope of $\Gamma_M$.
  Then there is a compact normal subgroup $N$ of $H$ such that 
 $H/N$ is 
isomorphic  to a   cocompact subgroup of  
 $Isom(X_{\bar{M}^k})$ for some $k\in \Q$. 
\end{theorem}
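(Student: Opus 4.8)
The plan is to follow the general outline for the envelope problem laid out in Section \ref{s4.1.1}, replacing the role of Theorem \ref{abcthm} with the sharper conjugation results of Theorems \ref{ProperTukiaThm} and \ref{ProperTukia2}. First I would observe that since $\Gamma_M$ is finitely generated and solvable, it is amenable, and any envelope $H$ (a locally compact group containing $\Gamma_M$ as a cocompact lattice) is quasi-isometric to $\Gamma_M$ and hence to the model space $X_M$. Because $X_M$ is QI-tame (this is where one invokes the QI-tameness discussion from \cite{D3}, Section 4), the canonical map $\Psi : H \to QI(X_M)$ has compact kernel $N$, and $\Psi(H)$ sits as a uniform subgroup of $QI(X_M)$ with $\Psi(\Gamma_M)$ represented by isometries. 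Setting $N = \ker \Psi$ gives the compact normal subgroup in the statement, and it remains to identify $H/N \cong \Psi(H)$ as a cocompact subgroup of $\Isom(X_{\bar M^k})$.

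The next step is to identify $QI(X_M)$ with a group of biLipschitz maps of an appropriate boundary. Since $M$ is diagonalizable, writing $M = e^A$ with $A$ diagonalizable, the boundary of $X_M$ is (up to the point at infinity) the parabolic visual metric space determined by the eigenvalues. I would split according to whether $\det M = 1$ or all eigenvalues have norm greater than one, but more usefully according to the sign of the real parts of the eigenvalues of $A$. The eigenvalues with positive real part contribute an upper boundary component, those with negative real part contribute (via $A^{-1}$, i.e.\ the reversed orientation) a lower component, and the tree factor $T_{d+1}$ with $d = |\det M|$ contributes a $\Q_m$-factor exactly as in the millefeuille setting of Section 4.3. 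Thus $\Psi(H)$ acts as a uniform quasisimilarity group on boundary factors of the form $(\R^{n_i} \times \Q_{m_i}, d_{A_i, m_i})$ (and on purely Euclidean factors $(\R^{n_i}, d_{A_i})$ when the corresponding tree degeneracy is trivial, $\det M = 1$). Since $H$ is amenable (being quasi-isometric to the amenable group $\Gamma_M$), so is $\Psi(H)$, so the amenability hypotheses of Theorems \ref{ProperTukiaThm} and \ref{ProperTukia2} are met. Applying these theorems factor by factor, I conjugate $\Psi(H)$ by a biLipschitz map $F_0$ into a product of similarity groups $\prod \Sim(\R^{n_i} \times \Q_{s_i}, d_{\lambda_i \bar A_i, s_i})$, where each $s_i$ is a power of a common integer with $m_i$; collecting the rescalings $\lambda_i$ and the base changes $s_i = m_i^{\lambda}$ (forced by the remark following Theorem \ref{ProperTukia2}) produces the rational exponent $k$ and the matrix $\bar M^k$.

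Finally, I would assemble these factorwise similarity actions back into a single isometric action on $X_{\bar M^k}$. As in Step (5) of the outline for Theorems \ref{specialcase} and \ref{pengthm}, the uniformity of $\Psi(H)$ as a subgroup of $QI$ forces the similarity constants in the upper, lower, and tree factors to be linked by a single real parameter, so that a product of similarities with coordinated constants is precisely an isometry of the fibered product $X_{\bar M^k}$. This realizes $\Psi(H)$, after conjugation, as a subgroup of $\Isom(X_{\bar M^k})$; cocompactness of the image follows from cocompactness of the original lattice embedding of $\Gamma_M$ together with the fact that $\Psi(H)$ contains the cocompact group $\Psi(\Gamma_M)$. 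I expect the main obstacle to be the bookkeeping in this last assembly step: verifying that the separate conjugating maps on the several boundary factors can be chosen compatibly (so that a single $F_0$ and a single exponent $k$ work simultaneously across all factors), and checking that the coordinated similarity constants genuinely descend to isometries of the fibered product rather than merely of the individual factors. The separability/continuity hypotheses needed to invoke Theorem \ref{ProperTukia2} in the $\Q_m$-factors must also be verified for $\Psi(H)$, which should follow from the QI-tame topology on uniform subgroups of $QI(X_M)$.
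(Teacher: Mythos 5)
Your proposal follows essentially the same route as the paper's own outline: pass to $\Psi\colon H\to QI(\Gamma_M)$ with compact kernel via QI-tameness, use the product identification $QI(\Gamma_M)\simeq Bilip(\R^{n_\ell},d_{A_\ell})\times Bilip(\R^{n_u}\times\Q_{\det M},d_{A_u,\det M})$ from \cite{D3}, conjugate each factor into a similarity group via Theorems \ref{ProperTukiaThm} and \ref{ProperTukia2} using amenability, and then use uniformity to lock the similarity constants together so the image lands in $\Isom(X_{\bar M^k})$. The assembly and compatibility issues you flag at the end are real but are exactly the points the paper also treats only in outline (deferring to \cite{D3} for the setup), so your proof matches the paper's.
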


This theorem is stronger than the previous one in that now we can conclude that $H/N$ is isomorphic to a cocompact subgroup of $Isom(X_{\bar{M}^k})$. 

\subsection*{Outline of proof of Theorem \ref{abcthm2}. }
Since most of the set up for this theorem is in \cite{D3} we will only sketch the argument and refer the reader to \cite{D3} for details.    Let $H$ be an envelope of $\Gamma_M$.
\begin{enumerate}
\item Following the outline given in the preamble above we can view $H$ (up to compact kernel) as a subgroup of $QI(\Gamma_M)$  via the map $\Psi: H \to QI(\Gamma_M)$.
\item From Section 7 in \cite{D3} we have the identification $$QI(\Gamma_M)  \simeq Bilip(\R^{n_\ell},d_{A_\ell})\times Bilip(\R^{n_u}\times \Q_{\det{M}} , d_{A_u, \det{M}}).$$
\item Since $H$ is a uniform subgroup of $QI(\Gamma_M)$ it projects to uniform quasi-similarity subgroups of 
$Bilip(\R^{n_\ell},d_{A_\ell})$
 and 
$Bilip(\R^{n_u}\times \Q_{\det{M}} , d_{A_u, \det{M}})$.
\item By Theorems \ref{ProperTukiaThm}  and \ref{ProperTukia2} we can conjugate these actions to similarity actions 
$$ fHf^{-1} \subset Sim(\R^{n_\ell},d_{\bar{A}_\ell})\times Sim(\R^{n_u}\times \Q_{(\det{M})^k} , d_{k\bar{A}_u, (\det{M})^k}).$$
\item
 Since $fHf^{-1}$ must still be a uniform subgroup of $QI(\Gamma_M)$ we see that it must actually sit inside $\Isom(X_{\bar{M}^k})$.
\end{enumerate}
{\bf Comments.}  Without Theorems \ref{ProperTukiaThm}  and \ref{ProperTukia2} the proof in \cite{D3} diverges from this proof after Step 3. Instead the partial Tukia-type results of Theorem 2 in \cite{D1} and Theorem 1.6 in \cite{D2} are used along with substantial additional analysis to find the conjugating map $f$.

\subsubsection{Envelopes of Filiform-by-cyclic groups.}
 Let $N$ be a connected and simply connected nilpotent Lie group with Lie algebra 
 $\mathfrak n$.  We call $N$ and $\mathfrak n$ {\it{rational}} if $\mathfrak n$ has a basis   with rational  structure constants. 
 It is well known that $N$ admits lattices if and only if it is rational. 
 In particular,  if $e_1, \cdots, e_n$ is a basis of $\mathfrak n$ with rational structure constants,  then there exists a positive integer $K$ such that
 the integral linear combinations of $K e_1, \cdots, K e_n$ is a lattice in $N$ (after identification of $\mathfrak n$ and $N$ via the exponential map), see 
  \cite{CG}, Theorem 5.1.8.

 
The model  Filiform group $\mathfrak f^n$ is rational.  Let $e_1, \cdots, e_{n+1}$ be the standard basis of $\mathfrak{f}_n$. As indicated above, there exists a positive integer $K$ such that 
 the set $L$ of  integral linear combinations of $K e_1, \cdots, K e_{n+1}$ 
 is a lattice in $\mathfrak f^n$.   
   Let $M=\mathfrak f_n/L$ be the quotient.  Then $\pi_1(M)=L$.

 If $\lambda\ge 1$ is an  integer, then the standard Carnot group 
 dilation $\delta_\lambda: \mathfrak f^n\ra \mathfrak f^n$ maps $L$ into $L$  and $\phi:=\delta_\lambda|_L: L\ra L$ is an injective homomorphism.  Let $\Gamma=L_\phi$ be the associated HNN extension.   
  Clearly  $\delta_\lambda$   projects to a covering map $f: M\ra M$. Furthermore, 
 $f_*: \pi_1(M)\ra \pi_1(M)$ agrees with $\phi$ after the identification $\pi_1(M)=L$. 
Let $M_f$ be the  mapping torus  of $f$. Then 
 $\pi_1(M_f)=\Gamma$ and $M_f$ is a  $K(\pi, 1)$ for $\Gamma$.

For any integer $m\ge 1$, let $X_{\mathfrak f^n, m}=\mathfrak f^n \times T_{m+1}$.  
 The metric  on   $X_{\mathfrak f^n, m}$ is defined as follows.
 Fix an end $\xi_o$ of the tree $T_{m+1}$ and let $b: T_{m+1}\ra \R$ be a Busemann function associated to $\xi_0$.  A complete geodesic in $T_{m+1}$ is vertical if one of its ends is $\xi_0$. We identify vertical geodesics   with $\R$ using the Busemann function $b$.  In this way, for each vertical geodesic $c$, we identify 
 $\mathfrak f^n\times c$ with the negatively curved solvable Lie group $S=\mathfrak f^n\rtimes \R$ defined at the end of Section \ref{filiform}.
    We equip $X_{\mathfrak f^n, m}$  with the path metric. 
   The space $X_{\mathfrak f^n, m}$  is similar to the space $X_M$ from subsection \ref{s4.1.1} in the case when all eigenvalues of $M$ lie outside the unit ball.
  When $m=[L: \phi(L)]$,  where $L$ is the lattice in $\mathfrak f^n$ described above,
  $X_{\mathfrak f^n, m}$  is the universal cover of $M_f$.

Now the argument in the proof of  Theorem \ref{abcthm2}   (using Theorem \ref{fili} instead of Theorem \ref{ProperTukiaThm})     shows the following:


\begin{theorem}
  Let  $L$ be  a lattice in $\mathfrak f^n$ constructed above  and $\Gamma$ an HNN extension as above.  Suppose $n\ge 3$.  Then for every envelope $H$ of $\Gamma$, there is a compact normal subgroup $N$ of $H$ such that $H/N$ is isomorphic to a cocompact
 subgroup of ${\text{Isom}}(X_{\mathfrak f^n, s})$  for some integer $s\ge 2$. 

\end{theorem}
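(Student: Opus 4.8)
The plan is to run the six-step argument from the proof of Theorem \ref{abcthm2}, substituting the Filiform data for the abelian-by-cyclic data and using Theorem \ref{fili} (together with the structural results on $QC(F^n)$ behind it) in place of Theorem \ref{ProperTukiaThm}. Let $H$ be an envelope of $\Gamma$ and set $m=[L:\phi(L)]$, so that $X_{\mathfrak f^n, m}$ is a model space for $\Gamma$. First I would invoke QI-tameness of $X_{\mathfrak f^n, m}$ (see Section 4 of \cite{D3}) to produce the map $\Psi\colon H\to QI(\Gamma)=QI(X_{\mathfrak f^n, m})$ whose kernel is a compact normal subgroup $N$ and whose image $\Psi(H)$ is a uniform subgroup of $QI(\Gamma)$; this $N$ is the compact normal subgroup in the statement. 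Since $\Gamma$ is nilpotent-by-cyclic it is amenable, and $H$, being quasi-isometric to the cocompact lattice $\Gamma$, is amenable as well.

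Next I would identify $QI(\Gamma)$ with biLipschitz maps of the visual boundary. Because $\delta_\lambda$ is a Carnot dilation it expands in every layer $V_j$, so there is no contracting factor: unlike the general abelian-by-cyclic situation of Theorem \ref{abcthm2}, where both an expanding and a contracting factor appear, only a single expanding boundary occurs, and as in Section 7 of \cite{D3} one obtains
$$QI(\Gamma)\simeq Bilip(F^n\times\Q_m,\; d_{C,m}),$$
where $d_{C,m}=\max\{d_C,d_{\Q_m}\}$ pairs the Carnot metric $d_C$ on $F^n$ with the $m$-adic metric and $(F^n\times\Q_m,d_{C,m})$ is the parabolic visual boundary of $X_{\mathfrak f^n, m}$. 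Uniformity of $\Psi(H)$ then says exactly that $\Psi(H)$ acts as a locally compact, amenable, uniform quasisimilarity group of $(F^n\times\Q_m,d_{C,m})$, acting cocompactly on distinct pairs since $\Gamma$ does.

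The central step is a Tukia-type theorem for $F^n\times\Q_m$, the Filiform analogue of Theorem \ref{ProperTukia2}, which I would assemble from the two rigidity mechanisms already in hand. In the $\Q_m$ direction, a quasisimilarity induces a quasisimilarity of the $m$-adics, which by \cite{MSW} can be conjugated into the similarities of $\Q_s$ for some $s$ with $m,s$ powers of a common integer; this is the source of the forced passage $m\to s$. In the $F^n$ direction, a quasisimilarity respects the fibration over $\Q_s$ and restricts on each fiber to a quasiconformal self-map of $F^n$, which by Theorem 1.1 of \cite{X2} has the form $h_{a_1,a_2}\circ L_p\circ F_h$. As in the proof of Theorem \ref{fili}, the solvability of $QC(F^n)$ (Claim \ref{claim}) supplies amenability, and I would apply Theorem \ref{conjtosim} to the induced action on $V_1\times\Q_s=\R e_2\times(\R e_1\times\Q_s)$, taking $\R e_2$ as the $\R$-factor and $Y=\R e_1\times\Q_s$ as the base, to straighten the shear $F_h$ uniformly over the fibers. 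Once the shear is removed, the form of $QC(F^n)$-maps shows that the full map on $F^n\times\Q_s$ is determined by its $V_1$-reduction up to a left translation (Lemma 2.5 of \cite{X2}), so $\Psi(H)$ is conjugated into $Sim(F^n\times\Q_s,d_{C,s})$.

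Finally, I would use uniformity once more: since $f\Psi(H)f^{-1}$ is a uniform subgroup of $QI(\Gamma)$ consisting of similarities, the similarity constants in the $F^n$ and $\Q_s$ factors must be synchronized so as to preserve the horospherical (height) structure of $X_{\mathfrak f^n, s}$, forcing $f\Psi(H)f^{-1}\subset\Isom(X_{\mathfrak f^n, s})$; cocompactness descends from $\Gamma$. Hence $H/N\cong f\Psi(H)f^{-1}$ is a cocompact subgroup of $\Isom(X_{\mathfrak f^n, s})$, as desired. The main obstacle is the central step: the excerpt contains no ready-made Tukia theorem for $F^n\times\Q_m$, so one must genuinely combine the $\Q_m$-rigidity of \cite{MSW} with the application of Theorem \ref{conjtosim} on the $V_1$-reduction and the fiberwise form of $QC(F^n)$-maps, and then track carefully how the valence of the target tree changes under the forced replacement $m\to s$.
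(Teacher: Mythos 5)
Your overall route is exactly the paper's: the paper's entire proof of this theorem is the single sentence ``run the argument of Theorem \ref{abcthm2}, using Theorem \ref{fili} in place of Theorem \ref{ProperTukiaThm}'', and your six steps (QI-tameness and $\Psi\colon H\to QI(\Gamma)$ with compact kernel $N$, identification of $QI(\Gamma)$ with a biLipschitz group of the boundary $\mathfrak f^n\times\Q_m$ --- with no contracting factor since $\delta_\lambda$ expands every layer --- a Tukia-type conjugation, and the final height-synchronization forcing the image into $\Isom(X_{\mathfrak f^n,s})$) are a faithful expansion of that sentence. You are also right that the one genuinely new ingredient is a Tukia theorem for $\mathfrak f^n\times\Q_m$, which the paper never states.

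There is, however, a concrete gap in your assembly of that central step. After you straighten the $V_1$-reduction over $Y=\R e_1\times\Q_s$ via Theorem \ref{conjtosim}, Lemma 2.5 of \cite{X2} determines each fiber map $F(\cdot,u)\colon\mathfrak f^n\to\mathfrak f^n$ only up to a left translation $L_{q(u)}$, and since the $V_1$-reductions now agree for all $u$, what you actually know is that $q(u)$ lies in $V_2\oplus\cdots\oplus V_n$ and may still depend on $u\in\Q_s$. A map $(p,u)\mapsto(L_{q(u)}S(p),\sigma(u))$ whose fibers are similarities of $\mathfrak f^n$ with a $u$-varying translation part is \emph{not} a similarity of the product metric $\max\{d_C,d_{\Q_s}\}$, so your conclusion ``so $\Psi(H)$ is conjugated into $Sim(F^n\times\Q_s,d_{C,s})$'' does not yet follow. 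Removing this residual dependence requires further conjugations by $u$-dependent left translations $(p,u)\mapsto(L_{w(u)}p,u)$ with $w$ valued in the abelian subgroup $\exp(V_2\oplus\cdots\oplus V_n)$, and the equation $-w(\sigma(u))+q(u)+S_*w(u)=\mathrm{const}$ is again a coboundary problem solved layer by layer by the Day's-theorem mechanism of Theorem \ref{conjtosim} (with $\R^n=V_j$, snowflake exponent $\beta=1/j$, and $Y=\Q_s$), exactly as in the inductions proving Theorems \ref{ProperTukiaThm} and \ref{ProperTukia2}. A similar, smaller omission occurs one step earlier: before $Y=\R e_1\times\Q_s$ can play the role of the base in Theorem \ref{conjtosim} its induced action must already be by similarities, which requires first making the $e_1$-translation $a(u)$ constant in $u$ (another application of Theorem \ref{conjtosim} with $Y=\Q_s$) and matching the scaling constant of $\sigma$ on $\Q_s$ with $|a_1|$ via Lemma \ref{hislip}(1) and the MSW rescaling $m\to s$. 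None of this changes the strategy --- and the paper itself supplies no more detail than you do --- but as written the central step would not compile into a proof without these additional straightenings.
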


\subsection{Lie groups quasi-isometric to certain solvable Lie groups}

 Tukia-type theorems can be used to characterize Lie groups and locally compact groups quasi-isometric to a given solvable Lie group. In this  subsection  we illustrate this using 
  Theorem \ref{fili} and 
 Theorem \ref{ProperTukiaThm}.

Notice that   any  two  left invariant Riemannian metrics on a connected Lie group are biLipschitz equivalent. 
Recall that we identify the model filiform algebra $\mathfrak f^n$ and the model filiform group $F^n$ via the exponential map. 
  Also recall that a locally compact group $G$  is compactly generated if there is a compact  neighborhood  $K$ of the identity element that generates $G$.   Similar to the word metric on a finitely generated group,   
   a word  pseudometric on $G$   can be defined using 
the compact generating set $K$,  and different generating sets result in quasi-isometric word  pseudometrics.

\begin{theorem}\label{s5.3.2}
 Let $n\ge 3$  and $S=\mathfrak f^n\rtimes \R$ be the semidirect product  associated to the standard action of $\R$ on the  model filiform
   group 
 $\mathfrak f^n$  by dilations.  \newline
 (1)   Let $G$ be a 
  connected and simply connected solvable Lie group. 
   Let $S$ and $G$ be equipped with left invariant   Riemannian metrics. If $G$ and $S$ are quasi-isometric, then they are isomorphic;\newline
  (2)   If a compactly generated locally compact group $G$ is  quasi-isometric to   $S$, then there is a  compact 
 normal subgroup $N$ of $G$,   such that   $G/N$  is  isomorphic to   a cocompact subgroup of 
$\text{Isom}(S)$.   

\end{theorem}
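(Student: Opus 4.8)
The plan is to run the boundary-rigidity machinery of Section \ref{app:sec}, using Theorem \ref{fili} as the Tukia-type input, exactly as in the outline for Theorem \ref{abcthm2}. Recall from the end of Section \ref{filiform} that $S=\mathfrak f^n\rtimes\R$ is negatively curved, with ideal boundary $\partial S=F^n\cup\{\infty\}$ and with the Carnot metric on $F^n$ realized as the parabolic visual metric associated to $\infty$. The key observation is that $\mathrm{Isom}(S)$ may be identified with $\mathrm{Sim}(\mathfrak f^n)=Q$ (the similarity group described in Section \ref{filiform}): every isometry of the negatively curved space $S$ fixes the special boundary point $\infty$ and induces a similarity of $(F^n,d_{CC})$, and conversely each element of $Q$ extends to an isometry of $S$. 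This identification is what lets us read the conclusions off the boundary.

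For part (2), I would first pass to the boundary. Since $S$ is a proper geodesic Gromov hyperbolic space, a compactly generated locally compact group $G$ quasi-isometric to $S$ quasi-acts on $S$, and the QI-tame construction recalled in Section \ref{app:sec} (cf.\ \cite{F,D3}) yields a homomorphism $\Psi\colon G\to QI(S)$ whose kernel $N$ is compact and whose image is a uniform subgroup. Every self-quasi-isometry of $S$ induces a quasisymmetric boundary map fixing $\infty$, so by \cite{HK} and Lemma 3.10 of \cite{X2} we may regard $QI(S)$, and hence the locally compact group $\Psi(G)\cong G/N$, as a uniform quasisimilarity group of $(F^n,d_{CC})$. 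Theorem \ref{fili} then provides $f\in QC(F^n)$ with $f\Psi(G)f^{-1}\subset Q=\mathrm{Isom}(S)$, and uniformity of $\Psi(G)$ forces this image to be cocompact. Thus $G/N$ is isomorphic to a cocompact subgroup of $\mathrm{Isom}(S)$, which is exactly part (2).

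For part (1) I would feed a connected and simply connected solvable $G$ into part (2). Such a $G$ is diffeomorphic to Euclidean space and so contains no nontrivial compact subgroup; hence the compact normal subgroup $N$ is trivial and $G$ embeds as a connected cocompact subgroup of $\mathrm{Isom}(S)$. The induced action of $G$ on $S$ is then proper and cocompact, and its point stabilizers, being compact subgroups of $G$, are trivial, so the action is free. Since asymptotic dimension is a quasi-isometry invariant and coincides with the manifold dimension for simply connected solvable Lie groups, $\dim G=\dim S$; a free proper action of the same dimension on the connected manifold $S$ has open (hence, by connectedness, full) orbits, so the action is simply transitive.

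It remains to upgrade ``$G$ is a connected solvable subgroup of $\mathrm{Isom}(S)$ acting simply transitively'' to ``$G\cong S$,'' and this is the step I expect to be the main obstacle, since in general nonisomorphic solvable Lie groups can act simply transitively by isometries on the same space. Writing $\mathrm{Isom}(S)^\circ=F^n\rtimes(\R\times T)$ with $T$ a torus and with $F^n$ its nilradical, I would project $G$ into the abelian quotient $\R\times T$ and compare $G\cap F^n$ with the corresponding subgroups of $S=F^n\rtimes\R$. The point is that the nilradical $F^n$ and the dilation direction $\R$ are characteristic data of $\mathrm{Isom}(S)$, so a simply transitive connected solvable subgroup must meet $F^n$ in a cocompact subgroup and project isomorphically onto a one-parameter group acting by dilations; tracking this forces the Lie algebra of $G$ to coincide with $\mathfrak f^n\rtimes\R$, giving $G\cong S$. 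Making this comparison precise, and in particular ruling out nonstandard modifications that preserve the metric but change the group, is where the real work lies.
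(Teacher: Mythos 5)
Your reduction to the boundary and the use of Theorem \ref{fili} track the paper's proof closely: the paper likewise builds a homomorphism $\phi\colon G\to QC(\partial S)$ from the conjugated quasi-isometries $T_x=f\circ L_x\circ\tilde f$, shows its kernel is compact by stability of quasigeodesics, and conjugates the image into $\mathrm{Sim}(\mathfrak f^n)$. Your part (2) is essentially the paper's argument, and for part (1) you correctly note that a connected, simply connected solvable Lie group has no nontrivial compact subgroups, so the kernel is trivial and $G$ embeds.

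The gap is in your finish of part (1). You set up a simply-transitive-action argument and then try to recover $G\cong S$ from the structure of $\mathrm{Isom}(S)^\circ$, which you write as $F^n\rtimes(\R\times T)$ with $T$ a torus, and you concede that identifying $G$ with $S$ from this data is ``where the real work lies.'' That work is unnecessary, and the postulated torus is not there for this space: Section \ref{filiform} shows that $\mathrm{Sim}(\mathfrak f^n)$ has exactly four connected components $h_{\epsilon_1,\epsilon_2}Q_0$, where the identity component $Q_0$ consists of the maps $L_p\circ\delta_t$ and is isomorphic to $S=\mathfrak f^n\rtimes\R$ itself via $(p,t)\mapsto L_p\circ\delta_{e^t}$; the isotropy is finite, not a torus. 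So once $G$ is conjugated into $\mathrm{Sim}(\mathfrak f^n)$, connectedness places it inside $Q_0\cong S$, and all that remains is the observation that the only connected subgroup of $S$ quasi-isometric to $S$ is $S$ itself (a proper connected subgroup has strictly smaller dimension, and dimension is a quasi-isometry invariant among connected, simply connected solvable Lie groups). Your projected comparison of nilradicals and dilation directions, besides being left incomplete, would have to contend with exactly the nonuniqueness phenomena you flag; the paper's route avoids them entirely by recognizing $Q_0$ as a copy of $S$.
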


\begin{proof}
   (1)  Let $f: G\ra S$ be a    quasi-isometry and $\tilde f: S\ra G$ a quasi-inverse  of $f$.   For any $x\in G$,   let 
$L_x: G\ra G$ be the left translation by $x$.    Since $L_x$ is an isometry, there exist $L\ge 1$ and $A\ge 0$ such that 
  for every $x\in G$,    the  map 
$$T_x:=f\circ L_x\circ \tilde f: S \ra S$$ is a   $(L, A)$-quasi-isometry.
   So   $T_x$   induces a boundary map $\phi(x):   \partial S \ra \partial S$, which is  a quasiconformal map.  Here $\partial S$ is equipped with a visual metric. 
  In this way we get  a  continuous  group homomorphism $\phi:  G\ra QC(\partial S)$
  and the image  $\phi(G)$ is a  uniform quasiconformal group of $\partial S$. 
   The kernel $K:=\ker(\phi)$ is a   closed normal subgroup of $G$.

 For any $s\in S$, consider three geodesic  rays starting at $s$  such that the angle between any two of them is   at least $\pi/2$.
For each $k\in K$,  $T_k$ is a  $(L, A)$-quasi-isometry   and   induces the identity map on the ideal boundary.
 By using stability of quasigeodesics,
       it is easy to see that there is a constant $C$ depending only on  $L, A$ and the Gromov hyperbolicity constant of $S$ such that  
$d(s, T_k(s))\le C$ for all  $s\in S$.
  Since $f$ is a quasi-isometry,  it follows that $K$ is bounded.  Hence $K$ is a  compact subgroup of $G$.
Since the only compact subgroup of a connected and simply connected solvable Lie group is the 
  trivial subgroup, we have $K=\{e\}$.  
   So $\phi$ is an embedding.

  Recall that  $\partial S$ can be identified with $\mathfrak f^n\cup \{\infty\}$.
   By the results in \cite{X2},  each $F\in QC(\partial S)$  fixes $\infty$ and $F|_{\mathfrak f^n}: \mathfrak f^n \ra \mathfrak f^n$ is a quasiconformal map  with the same quasiconformality constant.
 So we may assume $G$ is a uniform quasiconformal group of $\mathfrak f^n$.   
 By   Theorem \ref{fili},  we may assume   that   $G$ is a subgroup of the similarity group  $\text{Sim}(\mathfrak f^n)$  of $\mathfrak f^n$.
   Recall  that $\text{Sim}(\mathfrak f^n)$ has 4 connected components and the identity component 
 $Q_0$  of  $\text{Sim}(\mathfrak f^n)$  
  consists of maps of the form $L_p\circ \delta_t$    ($p\in  \mathfrak f^n$, $t>0$).    Notice that 
  $S=\mathfrak f^n\rtimes \R$ 
is isomorphic to $Q_0$ via the isomorphism    $(p,t)\mapsto L_p\circ \delta_{e^t}$.
 Since $G$ is connected, we may assume $G\subset Q_0\cong S$.  However, it is easy to see that the only connected subgroup of $S$ that is quasi-isometric to $S$ is $S$ itself.

(2)     follows from the above argument.
\end{proof}


It is easy to identify $\text{Isom}(S)$.  Let $S=\mathfrak f^n\rtimes \R$ be equipped with the left invariant  Riemannian   metric such that  $\mathfrak f^n$ and $\R$ are  perpendicular to each other and such that  $e_1, \cdots, e_{n+1}$ are orthonormal.    Then 
  $\text{Isom}(S)$ is isomorphic to $\text{Sim}(\mathfrak f^n)$, which is described in Section \ref{filiform}    before the proof of Theorem \ref{fili}.  The reason is that each isometry of $S$ induces a similarity of $\mathfrak f^n$ and each similarity of $\mathfrak f^n$  is the boundary map of an isometry of $S$.

 The argument in the proof of Theorem \ref{s5.3.2}
also shows  the  following:

\begin{theorem}\label{lcdia}
    Let $A$ and $\bar A$ be as in Theorem \ref{ProperTukiaThm}.\newline
 (1)  If  a connected and simply connected solvable Lie group is quasi-isometric to 
 $G_A$, then it is isomorphic  to   a cocompact subgroup of   
$\text{Isom}(G_{\bar A})$; \newline
(2)  If a compactly generated locally compact group $G$ is  quasi-isometric to   $G_A$, then there is a  compact 
 normal subgroup $N$ of $G$,   such that   $G/N$  is  isomorphic to   a cocompact subgroup of 
$\text{Isom}(G_{\bar A})$.

\end{theorem}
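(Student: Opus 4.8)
The plan is to follow the proof of Theorem \ref{s5.3.2} essentially line by line, replacing the model filiform group $S=\mathfrak f^n\rtimes\R$ by $G_A$, the boundary $\partial S=\mathfrak f^n\cup\{\infty\}$ by $\partial G_A=\R^n\cup\{\infty\}$, and the appeal to Theorem \ref{fili} by an appeal to Theorem \ref{ProperTukiaThm}. For part (1), let $f\colon G\to G_A$ be a quasi-isometry with quasi-inverse $\tilde f$. Since each left translation $L_x$ of $G$ is an isometry, the maps $T_x:=f\circ L_x\circ\tilde f$ are $(L,A)$-quasi-isometries of $G_A$ with constants independent of $x$. As $G_A$ is negatively curved (hence Gromov hyperbolic) by \cite{H}, each $T_x$ extends to a boundary homeomorphism, and I obtain a continuous homomorphism $\phi\colon G\to \mathrm{QS}(\partial G_A)$ whose image is a uniform group of quasisymmetries.

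Next I would identify $\phi(G)$ as a uniform quasisimilarity group of $(\R^n,d_A)$. By the results in \cite{X1} (the analogue for $G_A$ of the filiform statements from \cite{X2} used in Theorem \ref{s5.3.2}), every quasisymmetric self-map of $\partial G_A$ fixes $\infty$ and restricts to a quasisimilarity of $(\R^n,d_A)$, with quasisimilarity constant controlled by the quasisymmetry data; hence $\phi(G)$ is a locally compact uniform quasisimilarity group of $(\R^n,d_A)$. The kernel $K=\ker\phi$ is a closed normal subgroup, and exactly as in Theorem \ref{s5.3.2} — using stability of quasigeodesics applied to three geodesic rays issuing from a point with pairwise angle at least $\pi/2$ — every $T_k$ with $k\in K$ moves points a uniformly bounded amount, so $K$ is bounded, hence compact. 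For part (1), $G$ is connected, simply connected and solvable, so its only compact subgroup is trivial and $\phi$ is injective; for part (2) we set $N=K$ and pass to $G/N$.

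To invoke Theorem \ref{ProperTukiaThm} I must supply the two extra hypotheses that were unnecessary in Theorem \ref{s5.3.2}: amenability and cocompactness on pairs. Amenability is immediate — in (1) $G$ is solvable, and in (2) $G$ is quasi-isometric to the amenable group $G_A$, so $G$ (and therefore the quotient $\phi(G)$) is amenable. For cocompactness I would first observe that $G_A$ itself acts cocompactly on the space of distinct pairs of $\R^n$: left translations move one point of a pair to $0$, and since every eigenvalue of $A$ has positive real part the dilations $e^{tA}$ act cocompactly on $\R^n\setminus\{0\}$, so the remaining point can be pushed into a fixed compact set. Since $f$ is a quasi-isometry, $\phi(G)$ acts coboundedly on $G_A$, and a cobounded uniform quasi-isometry action on a proper geodesic hyperbolic space induces a cocompact action on the space of distinct boundary pairs; because every element of $\phi(G)$ fixes $\infty$, this descends to a cocompact action on the distinct pairs of $\R^n=\partial G_A\setminus\{\infty\}$. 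Theorem \ref{ProperTukiaThm} then yields a biLipschitz map conjugating $\phi(G)$ into $\text{Sim}(\R^n,d_{\bar A})$.

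Finally I would identify $\text{Sim}(\R^n,d_{\bar A})$ with $\Isom(G_{\bar A})$ — each isometry of $G_{\bar A}$ induces a similarity of its parabolic visual boundary $(\R^n,d_{\bar A})$ and conversely, exactly as in the description of $\Isom(S)$ given after Theorem \ref{s5.3.2}. Thus $\phi(G)$, and hence $G$ in case (1) or $G/N$ in case (2), embeds as a subgroup of $\Isom(G_{\bar A})$, and coboundedness of the action forces this subgroup to be cocompact. The step I expect to be the main obstacle is the verification of cocompactness on pairs together with the input from \cite{X1} that the boundary maps genuinely fix $\infty$ and are quasisimilarities of $(\R^n,d_A)$: this is precisely where the anisotropic geometry of $d_A$ (equivalently, the non-conformal nature of $A$) is used, and it is what allows us to land in the hypotheses of the amenable Tukia-type Theorem \ref{ProperTukiaThm} rather than having to run a Sullivan/Tukia argument on the full sphere.
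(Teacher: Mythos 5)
Your proposal is correct and is essentially the paper's own argument: the paper proves Theorem \ref{lcdia} only by the remark that ``the argument in the proof of Theorem \ref{s5.3.2} also shows the following,'' and your line-by-line adaptation (boundary homomorphism $\phi$, compactness of $\ker\phi$ via stability of quasigeodesics, conjugation into the similarity group, identification with $\Isom(G_{\bar A})$) is exactly that argument with Theorem \ref{ProperTukiaThm} in place of Theorem \ref{fili}. Your explicit verification of the two extra hypotheses of Theorem \ref{ProperTukiaThm} --- amenability and cocompactness on distinct pairs --- is a detail the paper leaves implicit, and you have supplied it correctly.
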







 \addcontentsline{toc}{subsection}{References}

\end{document}